\newtheorem{lemma}{Lemma}[section]
\newtheorem{theorem}[lemma]{Theorem}
\newtheorem{corollary}[lemma]{Corollary}
\newtheorem{proposition}[lemma]{Proposition}
\newtheorem{definition}[lemma]{Definition}
\newtheorem{remark}[lemma]{Remark}
\newtheorem{example}[lemma]{Example}
\DeclareMathOperator{\Var}{Var}
\DeclareMathOperator{\Cov}{Cov}
\DeclareMathOperator{\red}{red}
\def\N{\mathbb{N}}
\def\R{\mathbb{R}}
\def\E{\mathbb{E}}
\def\P{\mathbb{P}}
\def\I{\mathbb{I}}
\author{Lisa Hofer}
\title{A Central Limit Theorem for Vincular Permutation Patterns}
\affiliation{Institute of Mathematics, University of Zurich, Switzerland}
\keywords{permutation patterns, central limit theorem, dependency graphs, vincular patterns, variance estimate}
\begin{document}
\publicationdetails{19}{2018}{2}{9}{3269}
\maketitle
\begin{abstract}
We study the number of occurrences of any fixed vincular permutation pattern. We show that this statistics on uniform random permutations is asymptotically normal and describe the speed of convergence. To prove this central limit theorem, we use the method of dependency graphs. The main difficulty is then to estimate the variance of our statistics. We need a lower bound on the variance, for which we introduce a recursive technique based on the law of total variance.
\end{abstract}

\section{Introduction}
Permutation patterns are well studied objects in combinatorics and, more recently, also in probability theory. In combinatorics, most of the research is focused on pattern avoidance, \emph{i.e$.$} finding the number of permutations with no occurrences of a given pattern, see \cite{BonaBook,Kitaev}. Another problem is the study of statistics and their distribution. Those objects are studied combinatorially, for instance with multivariate generating functions as in \cite{ChatterjeeDiaconis,CraneDeSalvoElizalde}, but also in probability theory. One statistics of interest is the number of occurrences of a given pattern in a random permutation, where various distributions on permutations are considered. Often, people study the case of uniform permutations, as in \cite{Bona1,Fulman,JansonNakamuraZeilberger}. Among patterns, descents and inversions are the most well-known, see \cite{Fulman}.

The problem we consider in this article is the following: for a fixed pattern $\pi$, what is the asymptotic behaviour of the number of occurrences of $\pi$ in a uniform random permutation $\sigma_n$ of size $n$ going to infinity? We consider this problem when $\pi$ is a so-called vincular pattern.

\bigskip

To explain the precise meaning of this problem, we begin by describing different types of permutation patterns. When we study patterns, permutations are represented in one-line form, \emph{i.e$.$} as a sequence. A permutation of size $n$ is a reordering of the monotone sequence $12\dots n$. The study of patterns is the study of subsequences and their order. For example, consider the permutation
\begin{align*}
\sigma=\boldsymbol{5}8\boldsymbol{2}13\boldsymbol{4}76
\end{align*}
and its subsequence $524$. The unique permutation whose elements are listed in the same relative order is $312$. We say that the subsequence $524$ is an occurrence of the \emph{classical} pattern $312$. Occurrences of classical patterns can be any subsequences. Additional constraints on the subsequences lead to different types of patterns: \emph{tight}, \emph{very tight}, \emph{vincular} and \emph{bivincular} patterns (terminology from \cite{BonaBook,Kitaev}). To count as an occurrence of such a type of pattern, a subsequence must fulfil the constraints listed below:
\begin{itemize}
	\item Tight: all entries have adjacent positions.
	\item Very tight: all entries have adjacent positions and values.
	\item Vincular: some entries have adjacent positions or in other words appear in blocks.
	\item Bivincular: some entries have adjacent positions and some (maybe different) entries have adjacent values.
\end{itemize}
In the literature, patterns are sometimes called consecutive instead of tight and generalized or dashed instead of vincular. Note that vincular patterns generalize both classical and tight patterns. 

As an example of a vincular pattern, consider $3\underline{12}$ where the underlined symbols indicate that the last two entries are required to be in adjacent positions. In the permutation $\sigma$ above, the subsequence $524$ is therefore not an occurrence of this vincular pattern, but the following are: $513$, $534$, $813$, $834$, $847$. The number of occurrences of $3\underline{12}$ in $\sigma$ is $5$. We say the pattern $3\underline{12}$ has two \emph{blocks}, the first is one isolated entry and the second consists in two adjacent entries. Formally, we write a vincular pattern as a pair $(\pi,A)$, where $\pi$ is a permutation giving the order constraint and $A$ gives the required adjacencies.

\bigskip

Instead of counting occurrences of a pattern in a deterministic permutation $\sigma$ as above, we look at $\sigma_n$ which is a uniform permutation of size $n$. Considering a random variable counting the number of occurrences of a fixed pattern of size $k$ in $\sigma_n$, we ask how it behaves asymptotically, as $n$ goes to infinity. Since there are different types of patterns, there is actually a whole family of such problems.

The following answers to these problems are known.
\begin{itemize}
	\item In \cite{Fulman}, J. Fulman proves asymptotic normality for inversions and descents (classical and tight patterns of size $k=2$). In addition, he provides a rate of convergence.
	\item M. B\'ona establishes in \cite{Bona1} asymptotic normality for classical and tight patterns which are monotone (increasing or decreasing).
	\item In \cite{JansonNakamuraZeilberger}, asymptotic normality is shown for all classical patterns. In fact, the authors also establish the joint convergence.
	\item In \cite[Section 8]{CraneDeSalvoElizalde}, asymptotic normality and a rate of convergence are provided for tight patterns of size $k\geq 3$ in random permutations distributed with the so-called Mallows measure of parameter $q$
	(for $q=1$, this measure specializes to the uniform distribution which is of interest in this article). These results are obtained under the assumption that the highest degree component of the variance does not vanish. Proving this kind of variance estimate is often difficult, as we will discuss later in the introduction.
	\item L. Goldstein provides in \cite[Example 3.2]{Goldstein} a rate of convergence for tight patterns in case of the uniform measure. As in \cite{CraneDeSalvoElizalde}, this rate depends on the variance. In contrast to \cite{CraneDeSalvoElizalde}, the method used to obtain this rate applies to a larger family of statistics.
	\item Very tight patterns behave differently: for $k=2$, they are asymptotically Poisson distributed, see \cite{CorteelLouchardPemantle,Kaplansky}. It is easy to see that for $k>2$, the probability to find such a pattern tends to zero (see \cite[p.~3--4]{CorteelLouchardPemantle}).
	\item In \cite{CraneDeSalvo}, Poisson limit theorems are also obtained for tight patterns in Mallows permutations of size $n$, if the parameter of the Mallows distribution $q(n)$ is a function of $n$ of a specific form or if $q$ is fixed but the size of the pattern tends to infinity. This setting is quite orthogonal to the one of other papers (including this one).
\end{itemize}

\bigskip

In the present article, we generalize the result of asymptotic normality to vincular patterns and we also describe the speed of convergence. Our main result, proved in Section \ref{sec_CLT}, is the following.

Let $Z$ denote a standard normal random variable and $d_K$ denote the Kolmogorov distance which is the maximum distance between distribution functions. Denoting $\overline{X^{(\pi,A)}(\sigma_n)}$ the renormalized (mean $0$, variance $1$) random variable which counts the number of occurrences of $(\pi,A)$ in $\sigma_n$, it holds that for some positive constant $C$,
\begin{align*}
d_K\Bigr(\overline{X^{(\pi,A)}(\sigma_n)},Z\Bigr)\leq Cn^{-1/2}.
\end{align*}
This implies immediately that
\begin{align*}
\overline{X^{(\pi,A)}(\sigma_n)}\overset{d}{\rightarrow} Z,
\end{align*}
where $\overset{d}{\rightarrow}$ denotes convergence in distribution, hence proving asymptotic normality. In addition, the bound on $d_K$ quantifies the accuracy of the approximation of $\overline{X^{(\pi,A)}(\sigma_n)}$ by $Z$. Note that this result encompasses the results from \cite{Bona1,CraneDeSalvoElizalde,Fulman,Goldstein,JansonNakamuraZeilberger} previously mentioned except for the joint convergence in \cite{JansonNakamuraZeilberger} and the case of a general parameter $q\neq 1$ in \cite{CraneDeSalvoElizalde}.

\bigskip

Let us now discuss the method of proof. In the literature, the following methods have been used for normal approximation:
\begin{itemize}
	\item \emph{$U$-statistics} in \cite{JansonNakamuraZeilberger}. However, the number of occurrences of vincular patterns is not a $U$-statistics (unlike for classical patterns).
	\item \emph{Exchangeable Stein pairs} in \cite{Fulman} for patterns of size $k=2$. Here, we did not succeed in finding such a pair for patterns of any size.
	\item \emph{Size-bias couplings} in \cite{Goldstein} for tight patterns. We are not aware of such a coupling for vincular patterns.
	\item \emph{Dependency graphs} in \cite{Bona1} and \cite{CraneDeSalvoElizalde}. Such a graph captures the dependencies in a family of random variables. This is useful for our problem since $X^{(\pi,A)}(\sigma_n)$ can be decomposed as a sum of partially dependent random variables (see Equation \eqref{eq_sumdecomp}, p.~\pageref{eq_sumdecomp}).
\end{itemize}
The last three methods are based on \emph{Stein's method} (except for dependency graphs in \cite{Bona1}). This method is used to prove convergence in distribution as well as to describe the approximation error, see Section \ref{sec_Stein} for more details.

In Section \ref{sec_CLT}, we present two approaches to bound the Kolmogorov distance both based on dependency graphs: one using the Stein machinery following \cite{ChenRoellin, Ross} and one using the moment method following \cite{FerayNikeghbali, Janson, Saulis}. While their application is easy, there is one difficulty: we need a lower bound on the variance of $X^{(\pi,A)}(\sigma_n)$ to prove that $d_K(\overline{X^{(\pi,A)}(\sigma_n)},Z)$ goes to $0$.

\bigskip

The method to find that lower bound is discussed in Section \ref{sec_Lbound}. First, we show that $\Var(X^{(\pi,A)}(\sigma_n))$ is a polynomial in $n$. Denoting $j$ the number of blocks of the vincular pattern $(\pi,A)$, the polynomiality implies that 
\begin{align*}
\Var(X^{(\pi,A)}(\sigma_n))=Cn^{2j-1}+\mathcal{O}(n^{2j-2}), \text{ with } C\geq 0.
\end{align*}
If we can show that $C$ is different from $0$, then $\frac{1}{2}Cn^{2j-1}$ is a sharp lower bound (for $n$ big enough). The most natural approach to prove that $C$ is larger than $0$ is to find a formula for $C$ by expressing the variance in terms of covariances (see Equation \eqref{eq_var}, p.~\pageref{eq_var}). This is B\'ona's approach in \cite{Bona1}. Such a formula for $C$ is a \emph{signed} sum of binomials, which in our case is hard to examine. Instead, we introduce a new technique: a recurrence based on the law of total variance. It provides a lower bound for $\Var(X^{(\pi,A)}(\sigma_n))$ of the form $C'n^{2j-3/2}$. Thanks to the polynomiality, this is enough to prove that $C$ is larger than $0$ (see Section \ref{sec_Lbound}).

\bigskip

We conclude this introduction with further directions of research.
\begin{itemize}
	\item In this article, we establish the asymptotic normality for vincular patterns. Moreover, the asymptotic behaviour of very tight patterns is characterized in \cite{CorteelLouchardPemantle}: as Poisson distributed ($k=2$) or rare ($k>2$). Both these types are contained in the larger class of bivincular patterns. One could try to classify such patterns in terms of their limiting distribution.
	
	To study the asymptotically normal case, we suggest using so-called \emph{interaction graphs} or \emph{weighted dependency graphs} introduced in \cite{ChatterjeeDiaconis,Feray}. In \cite{ChatterjeeDiaconis}, interaction graphs are used for a similar statistics, "number of descents plus number of descents in the inverse" which also has constraints in positions and values. The classical tool of dependency graphs does not apply anymore since constraints along these two directions imply that the random variables in a natural sum decomposition are all pairwise dependent (unlike in Equation \eqref{eq_sumdecomp}, p.~\pageref{eq_sumdecomp}). Using the mentioned extensions of dependency graphs avoids this problem, but the main difficulty will again be estimating the variance.
	\item In \cite{JansonNakamuraZeilberger}, joint convergence is established for classical patterns and \cite[Theorem 4.5]{JansonNakamuraZeilberger} describes how dependent the single limit random variables are (for patterns of the same size). It would be interesting to study these questions for vincular patterns. Moreover, one could try to examine the speed of convergence in the multivariate case.
	\item The optimality of the bound on $d_K$ obtained in this article could be investigated. We believe it is optimal since no better bounds are obtained in similar problems (\emph{e.g$.$} subgraph counts in random graphs), but we do not have concrete mathematical evidence for it. 
\end{itemize}

\bigskip

The outline of this article is as follows. In Section \ref{sec_preliminaries}, we give all the necessary background and set up the notation. In Section \ref{sec_CLT}, we show two approaches to prove the main result. Both approaches rely on a lower bound on the variance, which is established in Section \ref{sec_var}.

\section{Background and notation}\label{sec_preliminaries}
Throughout this article, we write a permutation $\sigma$ in one-line notation, \emph{i.e$.$} $\sigma=\sigma_1\sigma_2\dots \sigma_n$. The length of the sequence $\sigma_1\sigma_2\dots \sigma_n$ is the \emph{size} of $\sigma$, denoted by $|\sigma|$. The set of all permutations of size $n$ is denoted by $S_n$. We use $[n]$ to denote the set $\{1,2,\dots,n\}$ and $\binom{[n]}{k}$ for the set of all subsets of $[n]$ which are of size $k$.

\subsection{Vincular patterns}
We refer to \cite{BonaBook} and \cite{Kitaev}, which discuss the notion of permutation patterns. In \cite{BonaBook}, the reader can find information about classical patterns while \cite{Kitaev} discusses patterns in more generality, \emph{e.g$.$} also the case of vincular patterns. A classical pattern is defined as follows.

\begin{definition}
	Let $\sigma\in S_n$ and $\pi\in S_k$. An \emph{occurrence} of the \emph{classical pattern} $\pi$ in $\sigma$ is a subsequence $\sigma_{i_1}\sigma_{i_2}\dots\sigma_{i_k}$ of length $k$ of $\sigma$ such that:
	\begin{itemize}
		\item $\pi_r<\pi_s \iff \sigma_{i_r}<\sigma_{i_s},\text{ for all } r,s \in [k]$.
	\end{itemize}
	We say that $\sigma$ has an occurrence of the classical pattern $\pi$ at positions $i_1,i_2,\dots,i_k$.
\end{definition}

\begin{example}
	Let $\sigma=2374561$. The subsequence $\sigma_2\sigma_5\sigma_7=351$ is an occurrence of the classical pattern $\pi=231$.
\end{example}

In contrast to classical patterns, vincular patterns have additional constraints on the subsequences that are allowed to be counted as an occurrence of the pattern. Certain parts of the pattern are required to be adjacent or, in other words, to appear in blocks.

\begin{definition}
	Let $\sigma\in S_n$, $\pi\in S_k$ and let $A\subseteq [k-1]$. An \emph{occurrence} of the \emph{vincular pattern} $(\pi,A)$ in $\sigma$ is a subsequence $\sigma_{i_1}\sigma_{i_2}\dots\sigma_{i_k}$ of length $k$ of $\sigma$ such that:
	\begin{itemize}
		\item $\sigma_{i_1}\sigma_{i_2}\dots\sigma_{i_k}$ is an occurrence of $\pi$ in the classical sense,
		\item $i_{a+1}=i_{a}+1$, for all $a\in A$.
	\end{itemize}
	We call $A$ the set of \emph{adjacencies}.
\end{definition}

\begin{example}
	Let $\sigma=2374561$. The subsequence $\sigma_2\sigma_3\sigma_7=371$ is an occurrence of the vincular pattern $(\pi,A)=(231,\{1\})$. Note that the subsequence from the previous example, $\sigma_2\sigma_5\sigma_7=351$, is not an occurrence of $(\pi,A)$ since $3$ and $5$ are not adjacent in $\sigma$.
\end{example}

\begin{remark}
	In \cite[Definition 1.3.1]{Kitaev}, the definition of vincular patterns is even more general, allowing also constraints on the beginning and on the end of a pattern occurrence. However, such constraints are rarely considered in the literature, and not included in the present work. The central limit theorem would not hold if we constraint the first entry of the pattern occurrence to be at the beginning, as can be seen in the pattern $12$ with $1$ forced to be the first entry of the permutation.
\end{remark}

In this article, we will work with the above definition but sometimes, it is more convenient to see adjacencies as \emph{blocks}. By block, we mean a maximal subsequence of the pattern whose entries are required to be adjacent. An equivalent way to encode the adjacency information of a vincular pattern is to give a list of block sizes. For example, the vincular pattern $(231,\{1\})$ would be written as $(231,(2,1))$, where the list $(2,1)$ describes a first block of size $2$ followed by a block of size $1$. This idea appears also in \cite[Definition 7.1.2]{Kitaev}, where the list of block sizes is called the \emph{type} of a vincular pattern.

Now, note that the block sizes add up to the size of the pattern, leading to the notion of \emph{composition}. The following definition can be found in \cite[p.~39]{FlajoletSedgewick}.

\begin{definition}
	A \emph{composition} of an integer $n$ is a sequence $(x_1,x_2,\dots,x_\ell)$ of integers, for some $\ell$, such that $n=x_1+x_2+\dots+x_\ell$ and $x_i\geq 1$ for all $i$.
\end{definition}

For example, $(2,1)$ is a composition of $3$. How one can go from one encoding of vincular patterns (by adjacencies or block sizes) to the other is explained by a bijection between subsets of $[k-1]$ and compositions of $k$, where $k=|\pi|$. This bijection associates for instance the composition $(4,2,2,1)$ of size $9$ to $\{1,2,3,5,7\}\subseteq [8]$. The formal construction of the bijection is given below. It will help understanding the rephrasing of the adjacency condition.
\\\\
For $A\subseteq [k-1]$, consider $j=k-|A|$ and $\{c_1,c_2,\dots,c_j\}=[k]\setminus A$ with $c_1<c_2<\dots<c_j=k$, and construct iteratively:
\begin{align}\label{eq_c's}
\begin{cases}
b_1&=c_1,\\ 
b_2&=c_2-c_1,\\
&\vdots\\
b_j&=c_j-c_{j-1}.
\end{cases}
\end{align}
Then, $(b_1,b_2,\dots,b_j)$ is a composition of $k$, since $\sum_{i=1}^{j}b_i=c_j=k$. On the other hand, if $(b_1,b_2,\dots,b_j)$ is a composition of $k$, then the inverse construction is:
\begin{align*}
A=[k]\setminus \Bigr\{\sum_{i=1,\dots,\ell}b_i \Bigr|\; \ell\in [j]\Bigr\},
\end{align*}
which is a subset of $[k-1]$.
\\

So, indeed, a vincular pattern $(\pi,A)$ can be equivalently defined as $(\pi,(b_1,b_2,\dots,b_j))$, where the adjacency condition is rephrased by:
\begin{itemize}
	\item $i_{a+1}=i_{a}+1$, for all $a\in [k]\setminus \bigr\{\sum_{i=1,\dots,\ell}b_i \bigr|\; \ell\in [j]\bigr\}$,
\end{itemize}
or, in other words, the first $b_1$ elements should be adjacent, the next $b_2$ also, and so on. We say the vincular pattern $(\pi,(b_1,b_2,\dots,b_j))$ has $j$ blocks, numbered from left to right, where $b_1,b_2,\dots,b_j$ are the respective block sizes. When we speak about a vincular pattern $(\pi,A)$ with $j$ blocks, it is the above bijection which is underlying.

In the literature, vincular patterns are commonly represented as permutations where some adjacent parts may be underlined, see \cite[Definition 1.3.1]{Kitaev}. What is underlined are the non-trivial blocks of the pattern, \emph{i.e$.$} blocks of size at least $2$. This representation is visual and we will use it when we work with concrete examples. For example, we would write $\underline{23}1$ for the vincular pattern $(231,(2,1))$, or equivalently for $(231,\{1\})$.

\subsection{Vincular pattern statistics on uniform permutations}
Let $\sigma_n$ be a uniform random permutation of size $n$ and let $(\pi,A)$ be a vincular pattern of size $k$. The vincular pattern statistics for the pattern $(\pi,A)$ on $\sigma_n$ is a random variable counting the number of occurrences of $(\pi,A)$ in $\sigma_n$. We denote it by $X^{(\pi,A)}(\sigma_n)$.

Since it is a counting statistics, $X^{(\pi,A)}(\sigma_n)$ can be naturally decomposed as a sum of indicator random variables. First, we introduce a notation to collect all sets of positions that are admissible for occurrences of $(\pi,A)$ in a permutation of size $n$:
\begin{align*}
\mathcal{I}(n,k,A):=\Bigr\{\{i_1,i_2,\dots,i_k\}\in \binom{[n]}{k} \;\Bigr|\; i_{a+1}=i_{a}+1, \text{ for all } a\in A, \text{ where } i_1<i_2<\dots<i_k\Bigr\}.
\end{align*}
Now, the sum decomposition of $X^{(\pi,A)}(\sigma_n)$ is the following:
\begin{align}\label{eq_sumdecomp}
X^{(\pi,A)}(\sigma_n)=\sum_{I\in \mathcal{I}(n,k,A)}X^{\pi}_{I}(\sigma_n),
\end{align}
where $X^{\pi}_{I}(\sigma_n)$ is $1$ if $\sigma_n$ has an occurrence of the (classical) pattern $\pi$ at positions given by $I$ and it is $0$ otherwise.

In \cite{Bona1}, the reader can find this sum decomposition for other types of patterns. The difference lies in the positions over which the summation runs. In our case, the total amount of admissible positions is counted as follows.

\begin{lemma}\label{lemma_card}
	For $n\geq k-j$, where $j=k-|A|$, it holds that:
	\begin{align*}
	|\mathcal{I}(n,k,A)|=\binom{n-k+j}{j}.
	\end{align*}
\end{lemma}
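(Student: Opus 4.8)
The plan is to translate membership in $\mathcal{I}(n,k,A)$ into a system of linear inequalities on the block positions, and then reduce the count to the standard ``stars and bars'' enumeration. I first pass to the block encoding $(\pi,(b_1,\dots,b_j))$ from the bijection above, so that $b_1+\dots+b_j=k$ and an admissible set of positions decomposes into $j$ runs of consecutive integers of respective sizes $b_1,\dots,b_j$. Given a set $\{i_1<i_2<\dots<i_k\}\in\mathcal{I}(n,k,A)$, I would record only the starting position $p_m:=i_{b_1+\dots+b_{m-1}+1}$ of block $m$, for $m\in[j]$. The adjacency conditions force the remaining entries of block $m$ to be exactly $p_m,p_m+1,\dots,p_m+b_m-1$, so the full set is recovered from the tuple $(p_1,\dots,p_j)$. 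Hence this assignment is a bijection between $\mathcal{I}(n,k,A)$ and the tuples of integers satisfying $p_1\geq 1$, the separation conditions $p_{m+1}\geq p_m+b_m$ for $1\leq m\leq j-1$ (so that successive blocks are disjoint and increasing), and $p_j+b_j-1\leq n$ (so that the last entry lies in $[n]$).

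The next step is to linearize these constraints through gap variables. I would set $g_0:=p_1-1$, then $g_m:=p_{m+1}-(p_m+b_m)$ for $1\leq m\leq j-1$, and finally $g_j:=n-(p_j+b_j-1)$. Each $g_m$ is a non-negative integer by the inequalities just listed, and a telescoping computation gives $g_0+g_1+\dots+g_j=n-\sum_{m=1}^{j}b_m=n-k$. Conversely, any $(j+1)$-tuple of non-negative integers summing to $n-k$ reconstructs a valid tuple $(p_1,\dots,p_j)$, so $\mathcal{I}(n,k,A)$ is in bijection with the set of non-negative integer solutions of $g_0+g_1+\dots+g_j=n-k$.

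Applying the stars and bars formula to this equation in $j+1$ variables yields $\binom{(n-k)+(j+1)-1}{(j+1)-1}=\binom{n-k+j}{j}$ solutions, which is the claimed cardinality. The one point that will require care is the boundary regime $k-j\leq n<k$: there the gap equation has a negative right-hand side and thus no solution, matching $\mathcal{I}(n,k,A)=\varnothing$, while $0\leq n-k+j<j$ forces $\binom{n-k+j}{j}=0$, so the formula remains correct; the hypothesis $n\geq k-j$ is precisely what keeps the binomial coefficient within the range where this reading is valid. I expect the verification that the adjacency constraints translate exactly into disjointness of the blocks to be the only genuinely substantive step, the remainder being a routine enumeration.
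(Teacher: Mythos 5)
Your proof is correct and follows essentially the same approach as the paper: both arguments observe that an admissible set is determined by the starting positions of its $j$ blocks and then account for the space the blocks occupy. The paper finishes by shifting these starting positions to obtain a bijection with $\binom{[n-k+j]}{j}$, whereas you pass to gap variables and apply stars and bars; these are two standard, equivalent phrasings of the same count.
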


\begin{proof}
	As for patterns, a set of positions $\{i_1,i_2,\dots,i_k\}\in \mathcal{I}(n,k,A)$, with $i_1<i_2<\dots<i_k$, can be split into $j$ blocks using the adjacency information of $A$. These blocks are ordered and of prescribed size. So, essentially, the set $\{i_1,i_2,\dots,i_k\}$ is determined by the set $\{i_1,i_{c_1+1},i_{c_2+1},\dots,i_{c_{j-1}+1}\}$, containing only the first position for each block (see Eq$.$ \eqref{eq_c's} for the description of $c_i$). The trick is to count such sets, but we have to be careful that between the first entries of the blocks there is enough space for the whole blocks. To overcome this problem, we shift everything according to the block sizes, as it is illustrated in Fig$.$ \ref{fig_lemma_card}.
	\begin{figure}[ht]
		\begin{center}
			\begin{tikzpicture}
			\filldraw [black] (0,1) circle (1.5pt);
			\filldraw [black] (0.5,1) circle (1.5pt);
			\filldraw [black] (1,1) circle (1.5pt);
			\filldraw [black] (1.5,1) circle (1.5pt);
			\filldraw [black] (2,1) circle (1.5pt);
			\filldraw [black] (2.5,1) circle (1.5pt);
			\filldraw [black] (3,1) circle (1.5pt);
			\filldraw [black] (3.5,1) circle (1.5pt);
			\filldraw [black] (4,1) circle (1.5pt);
			\filldraw [black] (4.5,1) circle (1.5pt);
			\filldraw [black] (5,1) circle (1.5pt);
			\draw [black] (1,1) circle (5pt);
			\draw [black] (1.5,1) circle (5pt);
			\draw [black] (2,1) circle (5pt);
			\draw [black] (3.5,1) circle (5pt);
			\draw [black] (4,1) circle (5pt);
			\draw [black] (4.5,1) circle (5pt);
			\draw [black] (5,1) circle (5pt);
			\filldraw [black] (0,0.1) circle (1.5pt);
			\filldraw [black] (0.5,0.1) circle (1.5pt);
			\filldraw [black] (1,0.1) circle (1.5pt);
			\draw [black] (1,0.1) circle (5pt);
			\filldraw [black] (2.5,0.1) circle (1.5pt);
			\filldraw [black] (3,0.1) circle (1.5pt);
			\filldraw [black] (3.5,0.1) circle (1.5pt);
			\draw [black] (3.5,0.1) circle (5pt);
			\filldraw [black] (4.5,0.1) circle (1.5pt);
			\draw [black] (4.5,0.1) circle (5pt);
			\draw (0.8,0.7) -- (2.2,0.7);
			\draw (3.3,0.7) -- (4.2,0.7);
			\draw (4.3,0.7) -- (5.2,0.7);
			\draw [->] (1,0.6) -- (1,0.35);
			\draw [->] (3.5,0.6) -- (3.5,0.35);
			\draw [->] (4.5,0.6) -- (4.5,0.35);
			\end{tikzpicture}
		\end{center}
		\caption{Illustration of the bijection.}\label{fig_lemma_card}
	\end{figure}
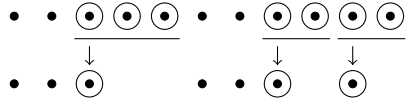
	More precisely, we associate to $\{i_1,i_2,\dots,i_k\}$ a set of positions in $\binom{[n-k+j]}{j}$ where each element $i_{c_t+1}$ is shifted to $i_{c_t+1}-(c_t-t)$. For example, to the set $\{3,4,5,8,9,10,11\}\in \mathcal{I}(11,7,\{1,2,4,6\})$ corresponding to $3$ ordered blocks of sizes $3$, $2$ and $2$, we associate the set $\{3,6,7\}\in \binom{[7]}{3}$, as shown in Fig$.$ \ref{fig_lemma_card}. It is easy to see that this construction describes a bijection between $\mathcal{I}(n,k,A)$ and $\binom{[n-k+j]}{j}$, so that we have:
	\begin{equation*}
	|\mathcal{I}(n,k,A)|=\biggr|\binom{[n-k+j]}{j}\biggr|=\binom{n-k+j}{j}.\\[-5ex]
	\end{equation*}
\end{proof}

\bigskip

\begin{remark}
	The restriction $n\geq k-j$ is necessary, since otherwise the binomial coefficient is not defined, but for $n<k$, we simply have $|\mathcal{I}(n,k,A)|=0$.
\end{remark}

\subsection{Representation of uniform permutations}
We start with the definition of \emph{reduction}, which is given in \cite[p.~1]{JansonNakamuraZeilberger}.

\begin{definition}
	Let $x_1x_2\dots x_n$ be a sequence of $n$ distinct real numbers. The \emph{reduction} of $x_1x_2\dots x_n$, which is denoted by $\red(x_1x_2\dots x_n)$, is the unique permutation $\sigma\in S_n$ such that order relations are preserved, \emph{i.e$.$} $\sigma_i<\sigma_j$ if and only if $x_i<x_j$ for all $i,j\in [n]$.
\end{definition}

As pointed out in \cite[proof of Theorem 4.1]{JansonNakamuraZeilberger}, it is a standard trick to construct a uniform permutation of size $n$ as the reduction of $n$ independent and identically distributed (i.i.d$.$) random variables which are uniform in the interval $[0,1]$. 

\begin{lemma}\label{lemma_red(U1,...,Un)}
	Let $U_1,\dots,U_n$ be i.i.d$.$ random variables, uniform in $[0,1]$. Then, $\red(U_1\dots U_n)$ is a uniform permutation of size $n$.
\end{lemma}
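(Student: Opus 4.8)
The plan is to show that the push-forward of the i.i.d.\ uniform distribution under $\red$ is the uniform measure on $S_n$, which amounts to proving that every permutation $\tau\in S_n$ is equally likely to arise as $\red(U_1\dots U_n)$. Since the $U_i$ are absolutely continuous, the event that two of them coincide has probability $0$; thus almost surely the sequence $U_1\dots U_n$ consists of $n$ distinct reals and $\red(U_1\dots U_n)$ is well defined. It therefore suffices to compute $\P\bigl(\red(U_1\dots U_n)=\tau\bigr)$ for an arbitrary fixed $\tau\in S_n$ and check that this value does not depend on $\tau$.

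First I would unwind the definition of $\red$ to describe the target event combinatorially. By definition, $\red(U_1\dots U_n)=\tau$ holds precisely when the $U_i$ have the same relative order as $\tau$, that is, when $U_i<U_j \iff \tau_i<\tau_j$ for all $i,j$. Equivalently, ranking the $U_i$ from smallest to largest induces a specific ordering, so the event $\{\red(U_1\dots U_n)=\tau\}$ coincides with the event that the $U_i$ fall in one prescribed order among the $n!$ possible strict orderings of $n$ distinct values. The key structural fact is that these $n!$ events partition the full-measure set $\{U_i \text{ all distinct}\}$, one event for each permutation describing the relative ranking.

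The central step is a symmetry argument: because $U_1,\dots,U_n$ are i.i.d., their joint law is invariant under any permutation of the coordinates, so every one of the $n!$ strict-ordering events carries the same probability. Concretely, for any two permutations the corresponding ordering events are mapped to one another by a coordinate permutation that preserves the joint distribution, hence they have equal probability. Since these $n!$ equiprobable events together exhaust a probability-$1$ set, each has probability $1/n!$. Therefore $\P\bigl(\red(U_1\dots U_n)=\tau\bigr)=1/n!$ for every $\tau\in S_n$, which is exactly the statement that $\red(U_1\dots U_n)$ is uniform on $S_n$.

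The only genuine subtlety, rather than a main obstacle, is handling the null event of ties so that $\red$ is almost surely defined and the $n!$ events form a true partition of a full-measure set; this follows immediately from absolute continuity. The rest is the standard exchangeability observation that i.i.d.\ coordinates induce a uniformly distributed relative ordering, so I do not anticipate any real difficulty here.
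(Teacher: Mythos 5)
Your proposal is correct and rests on the same key idea as the paper's proof: the exchangeability of the i.i.d.\ coordinates, combined with the almost-sure distinctness of the $U_i$. The paper phrases the symmetry step slightly more algebraically (via the identity $\red(U_{\pi_1}\dots U_{\pi_n})=\red(U_1\dots U_n)\pi$), while you phrase it as equiprobability of the $n!$ ordering events partitioning a full-measure set, but this is essentially the same argument.
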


Since we could not find a reference where Lemma \ref{lemma_red(U1,...,Un)} is proved, we provide a short proof here.

\begin{proof}
	 First, the random variables $U_1,\dots,U_n$ are almost surely all distinct. Now, let $\sigma\in S_n$ be arbitrary. Because the random variables $U_i$ are i.i.d$.$, we have for any $\pi \in S_n$,
	\begin{align*}
	\P\big(\red(U_1\dots U_n) = \sigma\big)
	&= \P\big(\red(U_{\pi_1}\dots U_{\pi_n}) = \sigma\big)\\
	&= \P\big(\red(U_1\dots U_n)\pi = \sigma\big)\\
	&= \P\big(\red(U_1\dots U_n) = \sigma\pi^{-1}\big),
	\end{align*}
	showing that $\red(U_1\dots U_n)$ takes all values in $S_n$ with the same probability. So, $\red(U_1\dots U_n)$ is a uniform permutation in $S_n$.
\end{proof}

This representation of a uniform permutation is particularly adapted for our purpose since it relies on \emph{independent} random variables.

\subsection{Dependency graphs}
The dependencies, and not only the pairwise dependencies, within a family of random variables can be represented in a graph as follows, see \cite{BaldiRinott,Bona1,FerayNikeghbali,Rinott}.

\begin{definition}
	A graph $G$ with vertex set $V$ is called a \emph{dependency graph} for a family of random variables $\{X_v\}_{v\in V}$ if it satisfies the following property:
	\begin{quote}
		If $V_1$ and $V_2$ are disjoint subsets of $V$ which are not connected by an edge in $G$, then the sets of random variables $\{X_v\}_{v\in V_1}$ and $\{X_v\}_{v\in V_2}$ are independent.
	\end{quote}
\end{definition}

A family of random variables may have several dependency graphs, capturing sometimes more, sometimes less dependency information. The least information is contained in the complete graph, which is always a dependency graph. As pointed out in \cite{FerayNikeghbali}, dependency graphs are often used to work with sums of \emph{partly} dependent random variables. For example, $X^{(\pi,A)}(\sigma_n)$ decomposed as in Eq$.$ \eqref{eq_sumdecomp} falls in this category. In Section \ref{sec_CLT}, we will see two methods to show asymptotic normality of $X^{(\pi,A)}(\sigma_n)$ using the concept of dependency graphs.  

\subsection{Law of total variance}
The following decomposition formula for the variance can be found in \cite[p.~385--386]{Weiss}.

\begin{proposition}\label{prop_LTV}
	For two random variables $Y$ and $J$, defined on the same probability space, the following holds:
	\begin{align*}
	\Var(Y)=\E[\Var(Y|J)]+\Var(\E[Y|J]).
	\end{align*}
\end{proposition}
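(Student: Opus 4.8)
The plan is to expand each of the two summands on the right-hand side in terms of first and second moments, and then to collapse everything using the tower property (law of iterated expectations), $\E[\E[Y|J]]=\E[Y]$, applied to both $Y$ and $Y^2$. The key observation is that the only ``mixed'' quantity, namely $\E[(\E[Y|J])^2]$, occurs in both summands and cancels when they are added.

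First I would handle the term $\E[\Var(Y|J)]$. Writing the conditional variance as $\Var(Y|J)=\E[Y^2|J]-(\E[Y|J])^2$ and taking expectations, the tower property applied to the first summand yields $\E[\E[Y^2|J]]=\E[Y^2]$, so that
\[
\E[\Var(Y|J)]=\E[Y^2]-\E\big[(\E[Y|J])^2\big].
\]
Next I would treat the term $\Var(\E[Y|J])$, viewing $\E[Y|J]$ as an ordinary random variable. By the definition of variance,
\[
\Var(\E[Y|J])=\E\big[(\E[Y|J])^2\big]-\big(\E[\E[Y|J]]\big)^2=\E\big[(\E[Y|J])^2\big]-(\E[Y])^2,
\]
where the last step again invokes the tower property. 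Summing the two displayed identities, the terms $\E[(\E[Y|J])^2]$ cancel and we are left with $\E[Y^2]-(\E[Y])^2=\Var(Y)$, which is the claim.

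The argument is essentially a bookkeeping exercise, so there is no serious obstacle; the only point requiring care is the legitimacy of the tower property and of the expectations above. This is guaranteed as soon as $Y$ is square-integrable, which ensures that $\E[Y|J]$, $\E[Y^2|J]$ and $\E[(\E[Y|J])^2]$ are all well defined and finite. In the application of this article $Y$ is a bounded random variable, so this integrability requirement is automatically satisfied.
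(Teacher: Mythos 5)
Your proof is correct, and it follows the same route the paper indicates: the paper does not write out the computation but notes that the result follows from the tower property $\E[\E[Y|\mathcal{G}]|\mathcal{H}]=\E[Y|\mathcal{H}]$ with $\mathcal{H}=\{\emptyset,\Omega\}$ and $\mathcal{G}=\sigma(J)$, which is exactly the cancellation argument you carry out explicitly. Your added remark on square-integrability is a sensible precision and is indeed automatic for the bounded counting statistics used in this article.
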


This is called the \emph{law of total variance}. It relates the variance of a random variable to its conditional variance and its conditional expectation. The proof uses the \emph{tower property}: $\E[\E[Y|\mathcal{G}]|\mathcal{H}]=\E[Y|\mathcal{H}]$ with $\mathcal{H}=\{\emptyset,\Omega\}$ and $\mathcal{G}=\sigma(J)$, the smallest $\sigma$-algebra such that $J$ is measurable.

It is a natural question if a statement similar to Proposition \ref{prop_LTV} holds for the conditional variance. The equivalent result is called the \emph{law of total conditional variance} and its proof uses the tower property with $\mathcal{H}=\sigma(J_1)$ and $\mathcal{G}=\sigma(J_1,J_2)$.

\begin{proposition}\label{prop_LTCV}
	For three random variables $Y$, $J_1$ and $J_2$, defined on the same probability space, the following holds:
	\begin{align*}
	\Var(Y|J_1)=\E\big[\Var(Y|J_1,J_2)\big|J_1\big]+\Var\big(\E[Y|J_1,J_2]\big|J_1\big).
	\end{align*}
\end{proposition}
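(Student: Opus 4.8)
The plan is to mimic the proof of Proposition~\ref{prop_LTV}, working throughout with conditional expectations given $J_1$ in place of ordinary expectations, so that the statement reduces to an algebraic identity combined with the tower property. First I would record the two elementary identities that define the conditional variances appearing in the claim: writing $\mathcal{H}=\sigma(J_1)$ and $\mathcal{G}=\sigma(J_1,J_2)$,
\begin{align*}
\Var(Y|J_1)&=\E[Y^2|J_1]-\big(\E[Y|J_1]\big)^2,\\
\Var(Y|J_1,J_2)&=\E[Y^2|J_1,J_2]-\big(\E[Y|J_1,J_2]\big)^2.
\end{align*}
These are just the usual ``variance equals second moment minus square of the mean'' formula applied to the respective conditional laws. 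Abbreviating $M:=\E[Y|J_1,J_2]$, which is $\mathcal{G}$-measurable, reduces the whole statement to manipulating $\E[\,\cdot\,|J_1]$ of polynomial expressions in $Y$ and $M$.

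For the first summand on the right-hand side, I would take the conditional expectation given $J_1$ of the second displayed identity and use linearity together with the tower property $\E[\E[Y^2|\mathcal{G}]|\mathcal{H}]=\E[Y^2|\mathcal{H}]$, valid since $\mathcal{H}\subseteq\mathcal{G}$, to obtain
\begin{align*}
\E\big[\Var(Y|J_1,J_2)\big|J_1\big]=\E[Y^2|J_1]-\E[M^2|J_1].
\end{align*}
For the second summand, I would expand $\Var(M|J_1)=\E[M^2|J_1]-\big(\E[M|J_1]\big)^2$ and again invoke the tower property, now in the form $\E[M|J_1]=\E[\E[Y|J_1,J_2]|J_1]=\E[Y|J_1]$, which yields
\begin{align*}
\Var\big(\E[Y|J_1,J_2]\big|J_1\big)=\E[M^2|J_1]-\big(\E[Y|J_1]\big)^2.
\end{align*}
Adding the two expressions, the terms $\E[M^2|J_1]$ cancel and what remains is exactly $\E[Y^2|J_1]-\big(\E[Y|J_1]\big)^2=\Var(Y|J_1)$, which is the claim.

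The computation is entirely routine, so there is no genuine obstacle; the only point requiring attention is the repeated use of the tower property, which is why it is worth stating up front with the explicit choice $\mathcal{H}=\sigma(J_1)$ and $\mathcal{G}=\sigma(J_1,J_2)$. I would also note that each conditional expectation is defined only up to a null set, so all the displayed equalities are to be read as almost-sure identities; since only finitely many of them are combined, this causes no difficulty, and no integrability issues arise beyond the standing assumption that the relevant second moments are finite.
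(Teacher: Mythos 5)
Your proof is correct and follows exactly the route the paper indicates: it only sketches the argument by saying the result follows from the tower property with $\mathcal{H}=\sigma(J_1)$ and $\mathcal{G}=\sigma(J_1,J_2)$, and your expansion of the conditional variances as second moment minus squared mean is the standard way to carry that out. No gaps; your remarks on almost-sure identities and second moments are appropriate but not essential.
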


A similar expression can be found for $\Var(Y|J_1,J_2)$, the inner conditional variance term in the above equation, and also for more conditioning random variables. Starting from Proposition \ref{prop_LTV} and iteratively using these expressions for the inner conditional variance terms, we can obtain an expression for the variance involving several random variables as conditions.

\begin{proposition}\label{prop_General_LTV}
	For some random variables $Y$ and $J_1,J_2,\dots,J_m$, all defined on the same probability space, the following holds:
	\begin{align*}
	\Var(Y)=\E[\Var(Y|J_1,\dots,J_m)]+\sum_{i=0,\dots,m-1}\E\big[\Var\big(\E[Y|J_1,\dots,J_{i+1}]\big|J_1,\dots,J_i\big)\big].
	\end{align*}
	Observe that the summand for $i=0$ simplifies to $\Var(\E[Y|J_1])$.
\end{proposition}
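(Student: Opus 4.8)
The plan is to prove the identity by induction on the number $m$ of conditioning variables, taking Proposition \ref{prop_LTV} as the base case and a general form of Proposition \ref{prop_LTCV} as the engine of the inductive step. For $m=1$ the claimed formula reduces to $\Var(Y)=\E[\Var(Y|J_1)]+\Var(\E[Y|J_1])$: the sum then consists only of the term $i=0$, and since conditioning on the empty tuple $J_1,\dots,J_0$ means no conditioning at all, that term equals $\Var(\E[Y|J_1])$ (its outer expectation being redundant). This is exactly Proposition \ref{prop_LTV} with $J=J_1$, so the base case holds.

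For the inductive step, I would first record the general form of the law of total conditional variance, obtained by repeating verbatim the tower-property argument behind Proposition \ref{prop_LTCV}, now with $\mathcal{H}=\sigma(J_1,\dots,J_m)$ and $\mathcal{G}=\sigma(J_1,\dots,J_{m+1})$:
\begin{align*}
\Var(Y|J_1,\dots,J_m)=\E\big[\Var(Y|J_1,\dots,J_{m+1})\big|J_1,\dots,J_m\big]+\Var\big(\E[Y|J_1,\dots,J_{m+1}]\big|J_1,\dots,J_m\big).
\end{align*}
Taking expectations on both sides and applying the tower property $\E\big[\E[\,\cdot\,|J_1,\dots,J_m]\big]=\E[\,\cdot\,]$ to the first summand yields
\begin{align*}
\E[\Var(Y|J_1,\dots,J_m)]=\E[\Var(Y|J_1,\dots,J_{m+1})]+\E\big[\Var\big(\E[Y|J_1,\dots,J_{m+1}]\big|J_1,\dots,J_m\big)\big].
\end{align*}
Substituting this into the induction hypothesis for $m$ replaces the leading term $\E[\Var(Y|J_1,\dots,J_m)]$ by $\E[\Var(Y|J_1,\dots,J_{m+1})]$ together with precisely the summand indexed by $i=m$. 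This extends the range of the sum from $i=0,\dots,m-1$ to $i=0,\dots,m$ while upgrading the conditional-variance term to depth $m+1$, which is exactly the asserted formula for $m+1$ and closes the induction.

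The argument is essentially bookkeeping, and I expect no deep obstacle; the two points deserving care are the following. First, I would make explicit the boundary conventions so that the $i=0$ summand is read as $\Var(\E[Y|J_1])$ and so that each application of the tower property uses the nestedness $\sigma(J_1,\dots,J_i)\subseteq\sigma(J_1,\dots,J_{i+1})$ correctly. Second, the one genuine ingredient beyond routine manipulation is the generalized law of total conditional variance displayed above: rather than re-deriving it, I would simply note that its proof is identical to that of Proposition \ref{prop_LTCV}, with the single conditioning variable $J_1$ replaced by the tuple $(J_1,\dots,J_m)$. Everything else is a direct iterated substitution, matching the informal description given before the statement.
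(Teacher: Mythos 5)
Your proof is correct and follows essentially the same route as the paper, which obtains the formula by iteratively expanding the inner conditional variance term via the law of total conditional variance (the paper only sketches this iteration; your induction on $m$ is a clean formalization of it). The base case, the generalized law of total conditional variance with $\mathcal{H}=\sigma(J_1,\dots,J_m)$ and $\mathcal{G}=\sigma(J_1,\dots,J_{m+1})$, and the bookkeeping in the inductive step all check out.
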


The only reference we could find for these formulas is \cite[Eqs$.$ 13 and 15]{BowsherSwain}, but it is likely that they appeared independently in other sources. This general decomposition formula for the variance will be very helpful in Section \ref{sec_Lbound}, where we need to find a lower bound for the variance of $X^{(\pi,A)}(\sigma_n)$.

\subsection{Asymptotic notation}
Since we will speak about the asymptotic behaviour of functions, we recall some standard notation.

\begin{definition}\label{def_order}
	Let $f,g:\N\rightarrow\R$ be two functions. We say that:
	\begin{itemize}
		\item $f$ is $\mathcal{O}(g)$, written $f=\mathcal{O}(g)$, if: $\exists\; C>0, n_0 \text{ such that } \forall\; n\geq n_0,\; |f(n)|\leq C|g(n)|,$
		\item $f$ is of \emph{order} $g$, written $f\asymp g$, if: $f=\mathcal{O}(g)$ and $g=\mathcal{O}(f)$,
		\item $f$ is \emph{asymptotically equivalent} to $g$, written $f\sim g$, if: $\underset{n\rightarrow \infty}{\lim}\tfrac{f(n)}{g(n)}=1.$
	\end{itemize}
\end{definition}

\begin{remark}
	\begin{enumerate}
		\item If there exists $C$ such that $f\sim Cn^k$, then $f\asymp n^k$.
		\item Let $p$ be a polynomial in $n$. If $p\asymp n^k$, then there exists $C$ such that $p\sim Cn^k$.
	\end{enumerate}
\end{remark}

\subsection{Stein's method for central limit theorems}\label{sec_Stein}
Stein's method is a technique invented by Charles Stein to bound the distance between two probability distributions. It is used to prove central limit theorems as well as approximation by the Poisson, exponential and other distributions. The survey article \cite{Ross} gives an overview of applications to different distributions and discusses methods to further analyse the bounds. We recall that applications of Stein's method to permutation patterns can be found in \cite{CraneDeSalvo,CraneDeSalvoElizalde,Fulman,Goldstein}. To illustrate the main concept of Stein's method for central limit theorems, we follow \cite[p.~6--9]{Ross}.

First, recall that the \emph{Kolmogorov distance} is a probability metric defined as follows, see \cite[p.~9]{ChenRoellin}, \cite[p.~5]{Ross}.

\begin{definition}\label{def_d_K}
	For two $\R$-valued random variables $X$ and $Y$, let $F_X$ and $F_Y$ be their distribution functions. The Kolmogorov distance between $X$ and $Y$ is defined as:
	\begin{align*}
	d_K(X,Y)=\sup_{t\in \R}|F_X(t)-F_Y(t)|.
	\end{align*} 
\end{definition}

In particular, since convergence of the distribution functions in all continuity points implies convergence in distribution (denoted $\overset{d}{\rightarrow}$), we have the following.

\begin{lemma}\label{lemma_d-conv}
	Let $(X_n)_{n\in \N}$ be a sequence of $\R$-valued random variables and let $Y$ be some $\R$-valued random variable. Then:
	\begin{align*}
	d_K(X_n,Y)\underset{n\rightarrow \infty}{\rightarrow}0 \implies X_n\overset{d}{\rightarrow}Y.
	\end{align*}
\end{lemma}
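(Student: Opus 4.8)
The plan is to reduce the claim to the characterization of convergence in distribution recalled immediately before the statement, namely that $X_n\overset{d}{\rightarrow}Y$ holds as soon as $F_{X_n}(t)\to F_Y(t)$ at every continuity point $t$ of $F_Y$. So it suffices to establish pointwise convergence of the distribution functions.

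The key observation is that, for any fixed $t\in\R$, the quantity $|F_{X_n}(t)-F_Y(t)|$ is one of the terms over which the supremum defining the Kolmogorov distance is taken, and is therefore dominated by it:
\begin{align*}
|F_{X_n}(t)-F_Y(t)|\leq \sup_{s\in\R}|F_{X_n}(s)-F_Y(s)|=d_K(X_n,Y).
\end{align*}
By hypothesis the right-hand side tends to $0$ as $n\to\infty$, so $F_{X_n}(t)\to F_Y(t)$ for every $t\in\R$. (In fact this even yields uniform convergence in $t$, but pointwise convergence is all that is needed.)

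In particular, the convergence holds at every continuity point of $F_Y$, which is precisely the condition that defines $X_n\overset{d}{\rightarrow}Y$. This finishes the proof. I do not expect any genuine obstacle: beyond the definition of $d_K$, the only ingredient is the standard equivalence between convergence in distribution and convergence of the distribution functions at the continuity points of the limit, which is exactly the fact invoked in the sentence preceding the statement.
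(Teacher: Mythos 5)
Your proof is correct and follows exactly the reasoning the paper uses (the paper in fact states the lemma without a separate proof, justifying it by the very sentence you cite): the supremum in $d_K$ dominates $|F_{X_n}(t)-F_Y(t)|$ for each fixed $t$, so pointwise (indeed uniform) convergence of the distribution functions follows, and in particular convergence at the continuity points of $F_Y$, which is the standard characterization of convergence in distribution.
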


In general, Stein's method proves convergence in distribution using a functional equation. The next lemma can be used to prove a central limit theorem (CLT), see \cite[Lemma 2.1]{Ross}.

\begin{lemma}[Stein's Lemma]
	Define the functional operator $\mathcal{A}$ by
	\begin{align*}
	\mathcal{A}f(x)=f'(x)-xf(x).
	\end{align*}
	Then, for a real-valued random variable $X$, the following are equivalent.
	\begin{enumerate}
		\item $X$ has the standard normal distribution.
		\item For all absolutely continuous functions $f:\R\rightarrow\R$ such that $\E[|f'(X)|]<\infty$, it holds that $\E[\mathcal{A}f(X)]=0$.  
	\end{enumerate}
\end{lemma}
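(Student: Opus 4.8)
The plan is to prove the two implications separately, the implication $(2) \Rightarrow (1)$ being the substantial one. For $(1) \Rightarrow (2)$, suppose $X$ is standard normal with density $\phi(x) = \tfrac{1}{\sqrt{2\pi}} e^{-x^2/2}$. The key observation is that $\phi$ solves the differential equation $\phi'(x) = -x\phi(x)$, so that
\[
\E[Xf(X)] = \int_{-\infty}^{\infty} x f(x)\, \phi(x)\, dx = -\int_{-\infty}^{\infty} f(x)\, \phi'(x)\, dx.
\]
I would then integrate by parts. The boundary term $[f(x)\phi(x)]_{-\infty}^{\infty}$ vanishes since $\phi$ decays like a Gaussian while the growth of $f$ is controlled by the hypothesis $\E[|f'(X)|]<\infty$, leaving $\int_{-\infty}^\infty f'(x)\phi(x)\,dx = \E[f'(X)]$. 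Hence $\E[\mathcal{A}f(X)] = \E[f'(X)] - \E[Xf(X)] = 0$.

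For $(2) \Rightarrow (1)$, the idea is to feed condition (2) a family of test functions that recovers the distribution function of $X$. Fix $z\in\R$ and consider the \emph{Stein equation}
\[
f'(x) - x f(x) = \I(x \le z) - \Phi(z),
\]
where $\Phi$ is the standard normal distribution function. Solving this linear first-order ODE with the integrating factor $e^{-x^2/2}$ yields the explicit solution
\[
f_z(x) = e^{x^2/2} \int_{-\infty}^{x} \bigl(\I(y \le z) - \Phi(z)\bigr)\, e^{-y^2/2}\, dy.
\]
Granting for the moment that $f_z$ is an admissible test function (absolutely continuous with $\E[|f_z'(X)|]<\infty$), I would apply condition (2) to $f = f_z$: its left-hand side $\E[\mathcal{A}f_z(X)]$ is $0$, while by construction the expectation of the right-hand side of the Stein equation is $\P(X \le z) - \Phi(z)$. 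Therefore $\P(X \le z) = \Phi(z)$ for every $z\in\R$, which is exactly the assertion that $X$ is standard normal.

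The main obstacle is to justify that $f_z$ is admissible, and in particular bounded despite the factor $e^{x^2/2}$. I would verify this by analysing the regimes $x \le z$ and $x > z$ separately and controlling the Gaussian tail integrals: the point is that the inner integral tends to $0$ as $|x|\to\infty$ at a rate that exactly compensates the blow-up of $e^{x^2/2}$. This yields uniform bounds of the form $\|f_z\|_\infty \le \sqrt{2\pi}/4$ and $\|f_z'\|_\infty \le 1$. Absolute continuity is immediate from the integral representation, and the bound on $f_z'$ makes $\E[|f_z'(X)|]<\infty$ automatic for any $X$; establishing these estimates carefully is the technical heart of the argument.
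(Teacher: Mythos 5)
Your proposal is correct and follows essentially the same route as the paper: the direction $(1)\Rightarrow(2)$ by integration by parts against the Gaussian density, and $(2)\Rightarrow(1)$ by testing against the bounded solutions $f_z$ of the Stein equation $f'(x)-xf(x)=\I_{x\le z}-\Phi(z)$, exactly as in the paper's sketch (which delegates the existence, uniqueness and boundedness of $f_z$ to the cited survey of Ross). The extra detail you supply --- the explicit integrating-factor formula for $f_z$ and the bounds $\|f_z\|_\infty\le\sqrt{2\pi}/4$, $\|f_z'\|_\infty\le 1$ --- is precisely the content of the reference the paper invokes.
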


The first direction (1) $\implies$ (2) is simply integration by parts. More interesting is the second direction. Let $Z$ be a standard normal random variable, $F_Z$ its distribution function and $\I_a$ the indicator function which is $1$ if and only if condition $a$ holds. In \cite[Lemma 2.2]{Ross} it is shown that for any $t\in \R$, there exists a unique bounded solution $f_t$ of the differential equation
\begin{equation}\label{eq_diffeq}
f'_t(x)-xf_t(x)=\I_{x \le t}-F_Z(t).
\end{equation}
Taking $X$ to be any real-valued random variable, we obtain the equation
\begin{equation}\label{eq_funceq}
|\E[f'_t(X)-Xf_t(X)]|=|F_X(t)-F_Z(t)|.
\end{equation}
Hence, the maximum distance between the distribution functions of $X$ and $Z$ is given by
\begin{align*}
d_K(X,Z)=\sup_{t\in \R}|F_X(t)-F_Z(t)|=\sup_{t\in \R}|\E[f'_t(X)-Xf_t(X)]|.
\end{align*}
By Lemma \ref{lemma_d-conv}, to prove the second direction (2) $\implies$ (1), it is enough that (2) holds for all $f_t$ with $t\in \R$ which are solutions to Eq$.$ \eqref{eq_diffeq}.

To prove a CLT, we prove that $(X_n)_{n\in \N}$ almost satisfies the functional equation in Stein's Lemma. We use Eq$.$ \eqref{eq_funceq} to bound $d_K(X_n,Z)$ which also quantifies the rate of convergence. In practice, Stein's method is useful because there exist various techniques to estimate the quantity $\E[f'_t(X)-Xf_t(X)]$: \emph{dependency graphs}, \emph{exchangeable pairs}, \emph{zero- and size-bias couplings}. Theorem \ref{thm_Stein} used in Approach I relies on the dependency graph method which is often useful if the random variable $X$ is a sum of partially dependent random variables like $X^{(\pi,A)}(\sigma_n)$. However, Stein's method may also be used for random variables with different structures. 

We conclude this short summary of Stein's method with two remarks.

\begin{remark}
	\begin{enumerate}
		\item The \emph{Stein-Chen method} is a version developed for Poisson approximation, see \cite{BarbourHolstJanson,CraneDeSalvo}.
		\item It is sometimes easier to study other probability metrics than the Kolmogorov distance. For normal approximation, often the Wasserstein distance is studied, see \cite{Rinott}. In \cite{CraneDeSalvo}, the total variation distance is studied for Poisson approximation.
	\end{enumerate}
\end{remark}

\section{Central limit theorem}\label{sec_CLT}
In this section, we prove a central limit theorem (CLT) for our random variable $X^{(\pi,A)}(\sigma_n)$, the vincular pattern statistics on uniform permutations. Let $(\pi,A)$ be a fixed vincular pattern and assume $n\geq |\pi|\geq 2$ (the other cases are trivial). We normalize $X^{(\pi,A)}(\sigma_n)$:
\begin{align*}
\overline{X^{(\pi,A)}(\sigma_n)}=\frac{X^{(\pi,A)}(\sigma_n)-e_n}{\sqrt{v_n}},
\end{align*}
where $v_n=\Var(X^{(\pi,A)}(\sigma_n))$, $e_n=\E[X^{(\pi,A)}(\sigma_n)]$. The following theorem is the main result of this article, where $\mathcal{N}(0,1)$ denotes the standard normal distribution.

\begin{theorem}\label{thm_CLT}
	Let $Z\sim \mathcal{N}(0,1)$, let $(\pi,A)$ be a fixed vincular pattern and for any $n$, let $\sigma_n$ be uniform in $S_n$. Then, there exists $C>0,n_0$ such that for $n\geq n_0$:
	\begin{align*}
	d_K\Bigr(\overline{X^{(\pi,A)}(\sigma_n)},Z\Bigr)\leq Cn^{-1/2}.
	\end{align*}
	Consequently, it holds that:
	\begin{align*}
	\overline{X^{(\pi,A)}(\sigma_n)}\overset{d}{\rightarrow} Z.
	\end{align*}
\end{theorem}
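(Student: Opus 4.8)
The plan is to apply an off-the-shelf normal-approximation bound for sums of random variables admitting a dependency graph to the decomposition \eqref{eq_sumdecomp}, and then to control every quantity entering that bound by a power of $n$. Write $W_n=X^{(\pi,A)}(\sigma_n)-e_n=\sum_{I\in\mathcal{I}(n,k,A)}\bigl(X^{\pi}_I(\sigma_n)-\E[X^{\pi}_I(\sigma_n)]\bigr)$, so that $\overline{X^{(\pi,A)}(\sigma_n)}=W_n/\sqrt{v_n}$. The first step is to exhibit a good dependency graph for the centered indicators. Using the representation $\sigma_n=\red(U_1\dots U_n)$ from Lemma~\ref{lemma_red(U1,...,Un)}, each indicator $X^{\pi}_I$ is a function of $\{U_i:i\in I\}$ alone; since the $U_i$ are independent, any two subfamilies whose index-sets share no common position involve disjoint collections of the $U_i$ and are therefore independent. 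Hence the graph $G$ on vertex set $\mathcal{I}(n,k,A)$ having an edge between $I$ and $I'$ exactly when $I\cap I'\neq\emptyset$ is a dependency graph for $\{X^{\pi}_I\}_I$.

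The second step is to estimate the three parameters governing the bound: the number of vertices $N$, the maximal degree $D$, and a uniform bound on the summands. By Lemma~\ref{lemma_card}, $N=\binom{n-k+j}{j}\asymp n^{j}$, where $j$ is the number of blocks of $(\pi,A)$. Each summand is a centered indicator, so it is bounded by $1$ and all its absolute moments are bounded by a constant. For the degree, a fixed $I$ has $k$ positions, and a given position lies in $\asymp n^{j-1}$ admissible sets (fixing one entry pins one block-start, leaving $j-1$ block-starts ranging over $\asymp n^{j-1}$ choices, cf.~\eqref{eq_c's}); as $k$ is constant this gives $D\asymp n^{j-1}$.

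Plugging these into the dependency-graph Kolmogorov bound following \cite{ChenRoellin,Ross}, which has the shape
\begin{align*}
d_K\!\left(\frac{W_n}{\sqrt{v_n}},Z\right)\leq C_1\,\frac{D^2 N}{v_n^{3/2}}+C_2\,\frac{D^{3/2}\sqrt{N}}{v_n},
\end{align*}
everything reduces to the order of $v_n$. From Section~\ref{sec_var} the variance is a polynomial in $n$ with $v_n=Cn^{2j-1}+\mathcal{O}(n^{2j-2})$ and, crucially, $C>0$, so $v_n\asymp n^{2j-1}$. Substituting $D\asymp n^{j-1}$, $N\asymp n^{j}$ and $v_n\asymp n^{2j-1}$, the first term is of order $n^{2(j-1)+j-(3j-3/2)}=n^{-1/2}$ and the second of order $n^{3(j-1)/2+j/2-(2j-1)}=n^{-1/2}$. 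This yields $d_K(\overline{X^{(\pi,A)}(\sigma_n)},Z)\leq Cn^{-1/2}$ for $n\geq n_0$, and the convergence in distribution then follows from Lemma~\ref{lemma_d-conv}.

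The genuinely hard part is not this dependency-graph estimate — the degree and cardinality counts are elementary and the Stein bound is applied verbatim — but rather the lower bound $C>0$ on the leading variance coefficient, without which $v_n$ could be of strictly smaller order and the whole bound would degrade. The natural route of reading $C$ off the covariance expansion of the variance produces a signed alternating sum of binomials that is awkward to sign directly, so I would instead follow the paper's recursive strategy: apply the general law of total variance (Proposition~\ref{prop_General_LTV}) by revealing the values $U_i$ block by block, retaining at each step a nonnegative contribution that is itself of order $n^{2j-3/2}$; polynomiality of $v_n$ then upgrades this $n^{2j-3/2}$ lower bound to the required $n^{2j-1}$, forcing $C>0$. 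A parallel derivation via the moment method, following \cite{FerayNikeghbali,Janson,Saulis}, delivers the same Kolmogorov bound and serves as a cross-check.
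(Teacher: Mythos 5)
Your proposal follows the paper's Approach~I essentially verbatim: the same dependency graph on $\mathcal{I}(n,k,A)$ with edges between intersecting index sets, the same parameter estimates $N\asymp n^{j}$, $D\asymp n^{j-1}$, $B=1$, the same Stein-type Kolmogorov bound from \cite{ChenRoellin,Ross}, and the same reliance on $v_n\asymp n^{2j-1}$ obtained from the law-of-total-variance recursion combined with polynomiality of the variance. The exponent bookkeeping checks out, so this is the paper's proof.
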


\begin{remark}
	The second claim of Theorem \ref{thm_CLT} follows from the first claim and Lemma \ref{lemma_d-conv}.
\end{remark}

Note that Theorem \ref{thm_CLT} is not only a CLT result. It also contains information about the speed of convergence, measured in the metric $d_K$. We present two different approaches to prove Theorem \ref{thm_CLT}. One works with Stein's method and the other one with cumulants, but both have in common that they use dependency graphs. Before we start the two different proofs, we give a dependency graph for our problem.

\subsection{Dependency graph for the $X^{\pi}_{I}(\sigma_n)$'s}
We need the following observation about the dependencies between the $X^{\pi}_{I}(\sigma_n)$'s from the sum decomposition of $X^{(\pi,A)}(\sigma_n)$ (see Eq$.$ \eqref{eq_sumdecomp}, p.~\pageref{eq_sumdecomp}). In \cite[Lemma 5.3]{CraneDeSalvo}, this is called the property of \emph{dissociation}.

\begin{lemma}\label{lemma_indep1}
	Let $(\pi,A)$ be a vincular pattern with $|\pi|=k$ and let $\mathcal{F}_1,\mathcal{F}_2\subseteq \mathcal{I}(n,k,A)$. If $\mathcal{F}_1$ and $\mathcal{F}_2$ are such that $\bigcup_{I\in \mathcal{F}_1}I$ and $\bigcup_{I\in \mathcal{F}_2}I$ are disjoint, then the corresponding families of indicators, $\{X^{\pi}_{I}(\sigma_n)\}_{I\in \mathcal{F}_1}$ and $\{X^{\pi}_{I}(\sigma_n)\}_{I\in \mathcal{F}_2}$, are independent.
\end{lemma}

\begin{proof}
	Let $U_1,\dots,U_n$ be independent and uniform in $[0,1]$. By Lemma \ref{lemma_red(U1,...,Un)}, we can represent a uniform permutation as $\red(U_1\dots U_n)$. Since pattern occurrence in non-intersecting subsequences depends on disjoint subsets of the set $\{U_1,\dots,U_n\}$, the independence of these subsets proves the independence of the corresponding families of indicators.
\end{proof}

With the help of Lemma \ref{lemma_indep1}, we can now construct a dependency graph for the family of indicator random variables $\{X^{\pi}_{I}(\sigma_n)\}_{I\in \mathcal{I}(n,k,A)}$, where $k=|\pi|$. We define its vertex set $V_n$ and its edge set $E_n$ as follows:
\begin{align}\label{eq_depgraph}
\begin{cases}
V_n=\mathcal{I}(n,k,A),\\
E_n=\{\{I_1,I_2\}\subseteq \mathcal{I}(n,k,A) |\; I_1\neq I_2,\; I_1\cap I_2 \neq \emptyset \}.
\end{cases} 
\end{align}
If $\mathcal{F}_1$ and $\mathcal{F}_2$ are disjoint subsets of $\mathcal{I}(n,k,A)$ that are not connected by an edge in the graph, then by construction, Lemma \ref{lemma_indep1} applies to $\mathcal{F}_1$ and $\mathcal{F}_2$, ensuring that $\{X^{\pi}_{I}(\sigma_n)\}_{I\in \mathcal{F}_1}$ and $\{X^{\pi}_{I}(\sigma_n)\}_{I\in \mathcal{F}_2}$ are independent. Hence, the dependency graph condition is fulfilled. 

An important parameter is the maximal degree of the dependency graph. 

\begin{lemma}\label{lemma_maxdegree}
	For a fixed vincular pattern $(\pi,A)$ with $j$ blocks, let $D-1$ be the maximal degree of the dependency graph given by \eqref{eq_depgraph}. Then:
	\begin{align*}
	D\asymp n^{j-1}.
	\end{align*}
\end{lemma}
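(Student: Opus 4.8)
The plan is to bound the maximal degree of the graph \eqref{eq_depgraph} from above and from below, in both cases by a quantity of order $n^{j-1}$. Since $D$ exceeds the maximal degree by exactly $1$ and $j\geq 1$ (so that $n^{j-1}\geq 1$), this immediately yields $D\asymp n^{j-1}$. Throughout, I would fix a vertex $I\in\mathcal{I}(n,k,A)$ and estimate the number of vertices $I'$ with $I\cap I'\neq\emptyset$. The crucial structural fact, inherited from the bijection in the proof of Lemma \ref{lemma_card}, is that a vertex of $\mathcal{I}(n,k,A)$ is completely determined by the $j$ starting positions of its blocks, and that after the shift these starting positions form an arbitrary $j$-subset of $[n-k+j]$.

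For the upper bound, I would argue by a union bound over the $k$ positions of $I$. Fix a position $p\in[n]$ and count the vertices $I'$ containing $p$. Such a $p$ must lie inside one of the $j$ blocks of $I'$; once one chooses which block of $I'$ contains $p$ and the offset of $p$ within that block (at most $k$ choices, a constant since $k$ is fixed), the starting position of that block is determined, while the remaining $j-1$ block positions range over $\mathcal{O}(n^{j-1})$ possibilities by the counting of Lemma \ref{lemma_card}. Hence the number of $I'$ containing a fixed $p$ is $\mathcal{O}(n^{j-1})$, and summing over the $k=\mathcal{O}(1)$ positions of $I$ shows that every vertex has degree $\mathcal{O}(n^{j-1})$. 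Therefore $D=\mathcal{O}(n^{j-1})$.

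For the lower bound, it suffices to exhibit a single vertex whose degree is at least of order $n^{j-1}$. I would take $I$ to have its first block starting at position $1$ and count the vertices $I'$ that also start their first block at $1$ (so that they share position $1$ with $I$). The remaining $j-1$ blocks of $I'$ may then be placed freely to the right, and by the same counting as in Lemma \ref{lemma_card} the number of such placements is $\binom{n-k+j-1}{j-1}\asymp n^{j-1}$. All these $I'$ are neighbours of $I$, so the maximal degree is at least of order $n^{j-1}$, giving $n^{j-1}=\mathcal{O}(D)$ and hence $D\asymp n^{j-1}$.

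The main obstacle is purely combinatorial bookkeeping: making the count ``number of $I'$ containing a fixed position $p$'' precise through the block-and-shift bijection of Lemma \ref{lemma_card}, and checking that all constant factors (the finitely many choices of which block contains $p$ and of its offset, together with the boundary effects near the ends of $[n]$) are absorbed into the constants hidden by $\mathcal{O}$ and $\asymp$. No single step is deep; the care lies in ensuring that both the upper and lower bounds carry the same exponent $j-1$, and in noting that the degenerate case $j=1$ (tight patterns, where a vertex is a single window of $k$ consecutive positions meeting only its $\mathcal{O}(1)$ translates) is consistent with $n^{j-1}=n^{0}$.
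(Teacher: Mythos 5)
Your proposal is correct and follows essentially the same route as the paper: the upper bound is a union bound over the $k$ positions of a fixed vertex, with the count of vertices through a fixed position reduced to $\mathcal{O}(n^{j-1})$ by pinning down one block (finitely many choices of block and offset) and letting the remaining $j-1$ blocks range freely via the counting of Lemma \ref{lemma_card}. The paper obtains the lower bound by taking $\max_{h\in H}|\{I: h\in I\}|$ for an arbitrary vertex $H$ rather than exhibiting an explicit vertex with first block at position $1$, but this is the same ``fix one block, count the rest'' idea.
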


\begin{proof}
	Let $k=|\pi|$. In the proof of Lemma \ref{lemma_card}, we have counted $|\mathcal{I}(n,k,A)|$. It is of order $n^j$. Now, for any fixed vertex $H\in \mathcal{I}(n,k,A)$, consider the quantity $|\{I\in \mathcal{I}(n,k,A)|\; I\cap H\neq \emptyset\}|$. It can be bounded from above and from below as follows:
	\begin{align*}
	\max_{h\in H}|\{I\in \mathcal{I}(n,k,A)|\; h\in I\}|
	&\leq |\{I\in \mathcal{I}(n,k,A)|\; I\cap H\neq \emptyset\}|\\
	&\leq \sum_{h\in H}|\{I\in \mathcal{I}(n,k,A)|\; h\in I\}|,
	\end{align*}
	with
	\begin{align*}
	|\{I\in \mathcal{I}(n,k,A)|\; h\in I\}|=\sum_{\ell\in [j]}|\{I\in \mathcal{I}(n,k,A)|\; h\text{ is in the } \ell\text{-th block of } I\}|.
	\end{align*}
	The quantity $|\{I\in \mathcal{I}(n,k,A)|\; h\text{ is in the } \ell\text{-th block of } I\}|$ is counted similarly to $|\mathcal{I}(n,k,A)|$. The constraint "$h$ is in the $\ell$-th block of $I$" means that $i_{c_\ell+1}$ may admit only a \emph{finite} set of values (see proof of Lemma \ref{lemma_card}, setting $c_0=0$). Compared to $|\mathcal{I}(n,k,A)|$, not $j$ but only $j-1$ blocks are free which decreases the order from $n^j$ to $n^{j-1}$. Since the number of terms in the two sums is independent of $n$, we have:
	\begin{equation*}
	D=\max_{H\in \mathcal{I}(n,k,A)}|\{I\in \mathcal{I}(n,k,A)|\; I\neq H,\; I\cap H\neq \emptyset\}|+1\asymp n^{j-1}.\\[-5ex]
	\end{equation*}
\end{proof}

\bigskip

The dependency graph we just constructed will be used in the next two sections to prove Theorem \ref{thm_CLT}.

\subsection{Approach I: Dependency graphs and Stein's method}
The following theorem can be obtained from results of \cite{ChenRoellin} and \cite{Ross}.

\begin{theorem}\label{thm_Stein}
	Let $Z\sim \mathcal{N}(0,1)$. Let $G$ be a dependency graph for $\{X_i\}_{i=1}^{N}$ and $D-1$ be the maximal degree of $G$. Assume there is a constant $B>0$ such that $|X_i-\E[X_i]|\leq B$ for all $i$. Then, for $W=\frac{1}{\sigma}\sum_{i=1}^{N}(X_i-\E[X_i])$, where $\sigma^2$ is the variance of the sum, it holds that:
	\begin{align*}
	d_K(W,Z)\leq \frac{8B^2D^{3/2}N^{1/2}}{\sigma^2}+\frac{8B^3D^2N}{\sigma^3}.
	\end{align*}
	In particular, if $\sigma^2\asymp B^2DN$ (or $\sigma^2\geq CB^2DN$ for some constant $C>0$), then:
	\begin{align*}
	d_K(W,Z)=\mathcal{O}\biggr(\sqrt{\frac{D}{N}}\biggr).
	\end{align*}
\end{theorem}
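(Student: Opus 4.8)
The plan is to derive this bound from a ready-made dependency-graph normal approximation theorem in the Kolmogorov metric, after a harmless centering and a crude moment estimate based on the uniform bound $|X_i-\E[X_i]|\leq B$. First I would pass to the centered variables $Y_i=X_i-\E[X_i]$. These satisfy $\E[Y_i]=0$ and $|Y_i|\leq B$, they admit the same dependency graph $G$ (subtracting constants creates no new dependencies), and $W=\frac{1}{\sigma}\sum_{i=1}^{N}Y_i$ with $\sigma^2=\Var(\sum_i Y_i)=\Var(\sum_i X_i)$. This reduces the statement to the standard setting of a normalized sum of mean-zero, partly dependent, bounded random variables.

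Next I would invoke the local-dependence bound in Kolmogorov distance available in \cite{ChenRoellin} (its exponent structure is also visible in the Wasserstein bound of \cite{Ross}). For a family $\{Y_i\}$ with dependency graph of maximal degree $D-1$, so that each dependency neighborhood has size at most $D$, this yields an estimate of the shape
\begin{align*}
d_K(W,Z)\leq \frac{C_1 D^{3/2}}{\sigma^2}\Bigl(\sum_{i=1}^{N}\E[Y_i^4]\Bigr)^{1/2}+\frac{C_2 D^2}{\sigma^3}\sum_{i=1}^{N}\E|Y_i|^3,
\end{align*}
with absolute constants $C_1,C_2$ that the references allow one to take equal to $8$. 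The two terms are the two natural error contributions of Stein's method under local dependence: a second-order term controlled by fourth moments with a $D^{3/2}$ factor, and a third-order term controlled by third absolute moments with a $D^2$ factor.

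The remaining work is a routine moment estimate. Using $|Y_i|\leq B$, I would bound $\E|Y_i|^3\leq B^3$ and $\E[Y_i^4]\leq B^4$, so that $\sum_i \E|Y_i|^3\leq N B^3$ and $(\sum_i \E[Y_i^4])^{1/2}\leq \sqrt{N}\,B^2$. Substituting these into the display above gives exactly
\begin{align*}
d_K(W,Z)\leq \frac{8 B^2 D^{3/2} N^{1/2}}{\sigma^2}+\frac{8 B^3 D^2 N}{\sigma^3},
\end{align*}
which is the claimed bound, with the summands written in the order of the statement. For the ``in particular'' assertion I would simply insert the hypothesis $\sigma^2\geq C B^2 D N$: the first term is then at most $\frac{8}{C}\sqrt{D/N}$ and the second at most $\frac{8}{C^{3/2}}\sqrt{D/N}$, so their sum is $\mathcal{O}(\sqrt{D/N})$.

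The only genuine difficulty, and the step I would spend the most care on, is pinning down the correct off-the-shelf theorem and matching conventions with it: in particular whether the cited estimate controls the Kolmogorov or the Wasserstein distance, and whether its degree parameter is the maximal degree or the maximal dependency-neighborhood size $D=(D-1)+1$. Once the statement is fixed with the right normalization of $D$ and with explicit constants, the centering, the moment bounds, and the final substitution are all immediate, so there is no substantive analytic obstacle beyond this bookkeeping.
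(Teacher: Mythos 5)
Your proposal is correct and follows essentially the same route as the paper: center the variables, invoke the Kolmogorov-distance dependency-graph bound from \cite{ChenRoellin} (which the paper actually assembles from Construction 2B and Corollary 2.6 there, together with the conditional-variance estimate $\Var(\sum_i\sum_{j\in A_i}X_iX_j)\leq 13D^3\sum_i\E[X_i^4]$ from the proof of Theorem 3.5 in \cite{Ross}, giving the constant $2\sqrt{13}\leq 8$ in the first term), and then apply the trivial moment bounds $\E[Y_i^4]\leq B^4$ and $\E|Y_i|^3\leq B^3$. The only caveat is the one you already flag yourself: the two-term estimate is not a single off-the-shelf theorem but must be pieced together from these references, which is exactly the bookkeeping the paper carries out.
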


\begin{proof}
	Without loss of generality, we assume that $\E[X_i]=0$ for all $i$. Let
	\begin{align*}
	A_i=\{i\}\cup \{1\leq j\leq N |\; \text{vertices }i\text{ and }j\text{ are connected in }G\}.
	\end{align*}
	Clearly, $|A_i|\leq D$ for all $i$. Using \cite[Construction 2B]{ChenRoellin}, we obtain a Stein coupling for $W$ so that with \cite[Corollary 2.6]{ChenRoellin} (for $\alpha=\frac{NB}{\sigma}$, $\beta=\frac{DB}{\sigma}$), we have:
	\begin{align}\label{eq_bound1}
	d_K(W,Z)\leq \frac{2}{\sigma^2}\sqrt{\Var\biggr(\sum_{i=1}^{n}\sum_{j\in A_i}X_iX_j\biggr)}+\frac{8B^3D^2N}{\sigma^3}.
	\end{align}
	Under the assumption of a dependency graph, from the end of the proof of \cite[Theorem 3.5]{Ross}, we have that:
	\begin{align}\label{eq_bound2}
	\Var\biggr(\sum_{i=1}^{N}\sum_{j\in A_i}X_iX_j\biggr)\leq 13D^3\sum_{i=1}^{N}\E[X_i^4]\leq 13B^4D^3N.
	\end{align}
	The final bound follows from Eqs$.$ \eqref{eq_bound1} and \eqref{eq_bound2}.
\end{proof}

We apply Theorem \ref{thm_Stein} to our problem.

\begin{proof}[of Theorem \ref{thm_CLT} (variant I)]
	Let $(\pi,A)$ have $j$ blocks and let $k=|\pi|$. Consider the family of random variables $\{X^{\pi}_{I}(\sigma_n)\}_{I\in \mathcal{I}(n,k,A)}$ and the dependency graph constructed for it in \eqref{eq_depgraph}. Denote by $N$ the size of the family and denote by $D-1$ the maximal degree of the dependency graph. Set $v_n=\Var(X^{(\pi,A)}(\sigma_n))$. From Lemma \ref{lemma_card} and Lemma \ref{lemma_maxdegree}, we have:
	\begin{align*}
	N\asymp n^j,\quad
	D\asymp n^{j-1}.
	\end{align*}
	We will see in Theorem \ref{thm_varasymp} that we have:
	\begin{align*}
	v_n\asymp n^{2j-1}.
	\end{align*}
	The proof being technical, it is postponed to Section \ref{sec_var}.
	Clearly, it holds that $v_n\asymp DN$. Moreover, for all $I\in \mathcal{I}(n,k,A)$, we have:
	\begin{align*}
	|X^{\pi}_I(\sigma_n)-\E[X^{\pi}_I(\sigma_n)]|\leq 1.
	\end{align*}
	Using Theorem \ref{thm_Stein} with $B=1$, we obtain:
	\begin{equation*}
	d_K\Bigr(\overline{X^{(\pi,A)}(\sigma_n)},Z\Bigr)=\mathcal{O}\biggr(\sqrt{\frac{n^{j-1}}{n^j}}\biggr)=\mathcal{O}(n^{-1/2}).\\[-5ex]
	\end{equation*} 
\end{proof}

\bigskip

\begin{remark}
	Almost the same bound as in Theorem \ref{thm_Stein} (giving also a $\mathcal{O}(n^{-1/2})$ in our case) has been obtained by Y. Rinott \cite[Theorem 2.2]{Rinott}.
\end{remark}

\begin{remark}
	Using \cite[Theorem 3.5]{Ross}, the same bound as in Theorem \ref{thm_CLT} can be obtained for the Wasserstein distance $d_W$, which is defined in \cite[p.~5]{Ross}. The bound on $d_W$ would then give a bound on $d_K$, however not an equally good one as Theorem \ref{thm_CLT} ($n^{-1/4}$ instead of $n^{-1/2}$) .
\end{remark}

\subsection{Approach II: Dependency graphs and cumulants}
For any random variable $X$, denote by $\kappa^{(r)}(X)$ its $r$-th cumulant. As in \cite[p.~16]{Saulis}, we say that $X$ satisfies condition $(S_{\gamma,\Delta})$ for some $\gamma\geq 0$, $\Delta>0$ if:
\begin{align*}
|\kappa^{(r)}(X)|\leq \frac{(r!)^{1+\gamma}}{\Delta^{r-2}}, \text{ for all } r\geq 3.
\end{align*}

The following result can be found in \cite[Corollary 2.1]{Saulis}. 

\begin{theorem}\label{thm_Saulis}
	Let $Z\sim \mathcal{N}(0,1)$. For any random variable $X$ satisfying condition $(S_{\gamma,\Delta})$, it holds that:
	\begin{align*}
	d_K(X,Z)\leq \frac{108}{\biggr(\Delta\frac{\sqrt{2}}{6}\biggr)^{\frac{1}{1+2\gamma}}}.
	\end{align*}
\end{theorem}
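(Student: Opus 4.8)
The statement is a Berry--Esseen bound derived purely from control of the cumulants, so the natural route is through the characteristic function $\varphi_X(t)=\E[e^{itX}]$. One should first note that condition $(S_{\gamma,\Delta})$ constrains only the cumulants of order $r\geq 3$; for the comparison with $Z$ to make sense it must be read together with the implicit normalization $\kappa^{(1)}(X)=0$ and $\kappa^{(2)}(X)=1$, so that $X$ has mean $0$ and variance $1$. The plan is to combine two ingredients. The first is a smoothing inequality of Esseen type: for every $T>0$,
\begin{align*}
d_K(X,Z)\leq \frac{1}{\pi}\int_{-T}^{T}\left|\frac{\varphi_X(t)-e^{-t^2/2}}{t}\right|\,dt+\frac{C_0}{T},
\end{align*}
where $C_0$ is an absolute constant stemming from $\sup_x\frac{1}{\sqrt{2\pi}}e^{-x^2/2}$. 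The second is a bound on $|\varphi_X(t)-e^{-t^2/2}|$ valid on a window $|t|\leq T$ whose width is dictated by $\Delta$ and $\gamma$, and which ultimately forces the choice $T\asymp\Delta^{1/(1+2\gamma)}$.

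For the second ingredient I would expand the logarithm, writing $\log\varphi_X(t)=-\frac{t^2}{2}+R(t)$ with $R(t)=\sum_{r\geq 3}\kappa^{(r)}(X)\frac{(it)^r}{r!}$, so that $\varphi_X(t)-e^{-t^2/2}=e^{-t^2/2}(e^{R(t)}-1)$. Condition $(S_{\gamma,\Delta})$ yields the termwise estimate
\begin{align*}
|R(t)|\leq \sum_{r\geq 3}(r!)^{\gamma}\frac{|t|^{r}}{\Delta^{r-2}}.
\end{align*}
This is exactly the heart of the matter: when $\gamma>0$ this power series has radius of convergence zero, so it cannot be summed for fixed $t$ and a naive termwise bound fails. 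Instead one treats it as an asymptotic series and applies the dedicated Statulevi\v{c}ius-type lemma (optimal truncation): cutting the sum at the index that minimizes the terms gives a usable bound on $R(t)$ throughout $|t|\leq T$ with $T\asymp\Delta^{1/(1+2\gamma)}$. Two consequences are needed. Near the origin the cubic term dominates, giving $|\varphi_X(t)-e^{-t^2/2}|\leq c\,e^{-t^2/2}|t|^3/\Delta$ for $|t|\lesssim\Delta^{1/3}$. Further out one shows that $\mathrm{Re}\,R(t)\leq t^2/4$ on the whole window, so that $|\varphi_X(t)|\leq e^{-t^2/4}$ keeps decaying like a Gaussian. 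The exponent $\frac{1}{1+2\gamma}$ is born precisely here: it is the largest scale of $|t|$ at which the optimally truncated remainder stays $o(t^2)$, since at $|t|\asymp\Delta^{1/(1+2\gamma)}$ the controlled part satisfies $|R(t)|/t^2\lesssim|t|/\Delta\to 0$ while the high-order terms begin to escape.

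With these bounds in hand the assembly is routine. I would split the integral in the smoothing inequality at $\tau\asymp\Delta^{1/3}$: the near zone contributes $\frac{c}{\Delta}\int_0^\infty t^2e^{-t^2/2}\,dt=\mathcal{O}(1/\Delta)$, while the far zone $\tau\leq|t|\leq T$ is controlled by the Gaussian tail $\int_\tau^\infty e^{-t^2/4}/t\,dt$, which is exponentially small in $\tau^2\asymp\Delta^{2/3}$. Since $\frac{1}{1+2\gamma}\leq 1$ and $\Delta\geq 1$, both contributions are dominated by the smoothing term $C_0/T=\mathcal{O}(\Delta^{-1/(1+2\gamma)})$. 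Choosing $T\asymp\Delta^{1/(1+2\gamma)}$ therefore gives $d_K(X,Z)=\mathcal{O}(\Delta^{-1/(1+2\gamma)})$, and a careful bookkeeping of the absolute constants — the Gaussian moments, the smoothing constant $C_0$, and the threshold appearing in the truncation lemma — produces the explicit factors $108$ and $\frac{\sqrt{2}}{6}$.

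I expect the genuine obstacle to be the estimate of the divergent factorial-weighted series, i.e$.$ the Statulevi\v{c}ius lemma itself: one must make the optimal-truncation argument rigorous and uniform on the $\Delta$-dependent window, simultaneously extracting the sharp cubic bound near the origin, the Gaussian decay of $|\varphi_X|$ in the far zone, and the exact width $\Delta^{1/(1+2\gamma)}$. Everything else — the smoothing inequality and the final balancing of $T$ — is standard once this lemma is available.
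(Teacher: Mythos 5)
First, a point of comparison: the paper does not prove this statement at all --- Theorem \ref{thm_Saulis} is imported verbatim from \cite[Corollary 2.1]{Saulis} and used as a black box, so there is no ``paper's own proof'' to match. Your sketch does follow the same Fourier-analytic strategy as the original source (Esseen's smoothing inequality plus control of $\varphi_X(t)-e^{-t^2/2}$ on a window $|t|\leq T\asymp\Delta^{1/(1+2\gamma)}$), and your observation that the statement only makes sense under the implicit normalization $\kappa^{(1)}(X)=0$, $\kappa^{(2)}(X)=1$ is correct and worth recording --- the condition $(S_{\gamma,\Delta})$ alone says nothing about the first two cumulants, and indeed the theorem is applied in the paper only to the standardized variable $\overline{X^{(\pi,A)}(\sigma_n)}$.

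As a standalone proof, however, your proposal has a genuine gap, and it sits exactly where you locate it. The step $\log\varphi_X(t)=-\tfrac{t^2}{2}+R(t)$ with $R(t)=\sum_{r\geq 3}\kappa^{(r)}(X)\tfrac{(it)^r}{r!}$ is not legitimate when $\gamma>0$: as you note, the series diverges for every $t\neq 0$, but the proposed repair --- ``treat it as an asymptotic series and optimally truncate'' --- is not available either, because $\log\varphi_X(t)$ is simply not equal to its formal cumulant series, so bounding a truncation of that series tells you nothing about $R(t)$ without a separate control of the Taylor remainder of $\log\varphi_X$. That remainder is governed by high-order derivatives of $\varphi_X$, i.e$.$ by moments, and converting the cumulant hypothesis $(S_{\gamma,\Delta})$ into usable moment and derivative bounds (together with the lower bound $|\varphi_X(t)|\geq e^{-t^2/4}$ needed to even define $\log\varphi_X$ on the window) is precisely the content of the Statulevi\v{c}ius lemma you defer. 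In other words, the one nontrivial ingredient --- from which the window width $\Delta^{1/(1+2\gamma)}$, the cubic bound near the origin, the Gaussian decay in the far zone, and ultimately the constants $108$ and $\tfrac{\sqrt{2}}{6}$ all emerge --- is asserted rather than proved. Given that the paper itself treats the theorem as a citation, the honest conclusion is that your outline correctly identifies the architecture of the known proof but does not constitute one; for the purposes of this paper the appropriate ``proof'' is the reference to \cite[Corollary 2.1]{Saulis}.
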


To prove that condition $(S_{\gamma,\Delta})$ is satisfied by our random variable $X^{(\pi,A)}(\sigma_n)$, we use the following result from \cite[p.~71]{FerayNikeghbali} giving a bound on cumulants of sums of random variables. A slightly weaker version has been established by S. Janson \cite[Lemma 4]{Janson}, see also \cite[p.~71]{FerayNikeghbali}.

\begin{theorem}\label{thm_Janson&FN}
	Let $\{X_v\}_{v\in V}$ be a family of random variables with dependency graph $G$. Denote by $N$ the number of vertices of $G$ and by $D-1$ the maximal degree of $G$. Assume that the $X_v$'s are uniformly bounded by a constant $B$. Then, if $X=\sum_{v\in V}X_v$, for any integer $r\geq 1$, one has:
	\begin{align*}
	|\kappa^{(r)}(X)|\leq 2^{r-1}r^{r-2}ND^{r-1}B^r.
	\end{align*}
\end{theorem}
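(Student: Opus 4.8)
The plan is to exploit the two standard properties of joint cumulants — multilinearity and the vanishing of cumulants of independent blocks — together with a spanning-tree count. Writing $X=\sum_{v\in V}X_v$ and using multilinearity of the joint cumulant, I would first expand
\begin{align*}
\kappa^{(r)}(X)=\sum_{(v_1,\dots,v_r)\in V^r}\kappa(X_{v_1},\dots,X_{v_r}),
\end{align*}
reducing the problem to controlling the individual joint cumulants $\kappa(X_{v_1},\dots,X_{v_r})$ and counting how many of them fail to vanish.

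The vanishing step is where the dependency graph enters. Given a tuple $(v_1,\dots,v_r)$, put an edge between two positions $s,t\in[r]$ whenever $v_s=v_t$ or $\{v_s,v_t\}\in E(G)$. If the resulting graph on $[r]$ is disconnected, split $[r]=I\sqcup J$ into two nonempty parts with no edge between them; then the vertex sets $\{v_i:i\in I\}$ and $\{v_j:j\in J\}$ are disjoint and joined by no edge of $G$, so by the defining property of a dependency graph the families $\{X_{v_i}\}_{i\in I}$ and $\{X_{v_j}\}_{j\in J}$ are independent. Since a joint cumulant of two independent blocks of variables vanishes, only the tuples whose induced graph on $[r]$ is \emph{connected} contribute.

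To count these connected tuples, I would use a spanning-tree argument. Every connected graph on $[r]$ contains a spanning tree, and by Cayley's formula there are $r^{r-2}$ labelled trees on $[r]$. For a fixed tree rooted at position $1$, the value $v_1$ at the root can be any of the $N$ vertices, and each remaining position, processed after its parent, must take a value equal or adjacent to its parent's value, giving at most $1+(D-1)=D$ choices. Summing $ND^{r-1}$ over all $r^{r-2}$ trees (every connected tuple is realized by at least one of its spanning trees) bounds the number of contributing tuples by $r^{r-2}ND^{r-1}$.

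It remains to bound each surviving cumulant, and this is the step I expect to be the main obstacle. One needs $|\kappa(X_{v_1},\dots,X_{v_r})|\le 2^{r-1}B^r$ whenever $|X_v|\le B$. The naive route — applying the triangle inequality to the moment–cumulant (partition-lattice) formula — is too lossy: already for $r=3$ it produces the constant $6$ rather than $4=2^{r-1}$, because it ignores the sign cancellations between partitions. A correct argument therefore requires a sharper estimate on joint cumulants of bounded variables, for instance via a refined tree/forest expansion of the cumulant or an inductive estimate exploiting these cancellations; this is precisely the improvement of \cite{FerayNikeghbali} over the weaker constant of \cite{Janson}. Granting this lemma, multiplying the per-cumulant bound $2^{r-1}B^r$ by the tuple count $r^{r-2}ND^{r-1}$ yields $|\kappa^{(r)}(X)|\le 2^{r-1}r^{r-2}ND^{r-1}B^r$, as claimed (the edge cases $r=1,2$ being checked directly).
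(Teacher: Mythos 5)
The paper does not actually prove this statement: it is imported verbatim from \cite[p.~71]{FerayNikeghbali} (with a slightly weaker antecedent in \cite{Janson}), so there is no internal proof to compare against, and your proposal must be judged on its own. Its architecture is the right one — multilinearity of joint cumulants, vanishing of $\kappa(X_{v_1},\dots,X_{v_r})$ whenever the graph $H$ on $[r]$ induced by equality/adjacency of the $v_s$ is disconnected, and a spanning-tree count producing $r^{r-2}ND^{r-1}$ — and those three steps are carried out correctly.

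The gap is the lemma you propose to ``grant'': the uniform bound $|\kappa(Y_1,\dots,Y_r)|\le 2^{r-1}B^r$ for variables bounded by $B$ is false, not merely hard. Take all $Y_i$ equal to a single Rademacher variable $Y$ (so $B=1$ and the tuple is trivially connected): then $\kappa(Y,\dots,Y)=\kappa^{(r)}(Y)$, and from $\log\cosh t=\tfrac{t^2}{2}-\tfrac{t^4}{12}+\tfrac{t^6}{45}-\tfrac{17t^8}{2520}+\cdots$ one gets $\kappa^{(8)}(Y)=-272$, whereas $2^{7}B^8=128$. More structurally, a factorisation into (number of connected tuples) times (uniform per-cumulant bound) cannot yield the stated constant: for $N=D=1$ there is exactly one connected tuple, so the entire factor $2^{r-1}r^{r-2}$ must already be absorbed by the single cumulant $\kappa^{(r)}(X_v)$. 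What \cite{FerayNikeghbali} actually proves is the weighted estimate $|\kappa(X_{v_1},\dots,X_{v_r})|\le 2^{r-1}B^r\, ST(H)$, where $ST(H)$ is the number of spanning trees of the very graph $H$ you construct; summing this over all tuples and exchanging the order of summation — counting pairs (tuple, spanning tree) rather than tuples, so that each tuple is weighted by how many of its spanning trees exist — gives $\sum_{\mathrm{tuples}}ST(H)\le r^{r-2}ND^{r-1}$ by exactly your root-and-propagate argument. So your combinatorial count survives verbatim once it is applied to pairs, but the analytic heart of the theorem is the spanning-tree-weighted cumulant bound, and replacing it by a uniform $2^{r-1}B^r$ breaks the proof.
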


With the help of Theorem \ref{thm_Janson&FN}, we apply Theorem \ref{thm_Saulis} to our problem.

\begin{proof}[of Theorem \ref{thm_CLT} (variant II)]
	Let $(\pi,A)$ have $j$ blocks and let $k=|\pi|$. Consider the normalized indicator random variables $X^{\pi}_{I}(\sigma_n)$:
	\begin{align*}
	\overline{X^{\pi}_{I}(\sigma_n)}=\frac{X^{\pi}_{I}(\sigma_n)-e_I}{\sqrt{v_n}},
	\end{align*}
	where $v_n=\Var(X^{(\pi,A)}(\sigma_n))$, $e_I=\E[X_I^{\pi}(\sigma_n)]$. Clearly, $\overline{X^{(\pi,A)}(\sigma_n)}$ is the sum of the $\overline{X^{\pi}_{I}(\sigma_n)}$'s. It is easy to see that the dependency graph constructed in \eqref{eq_depgraph} is also a dependency graph for the family $\{\overline{X^{\pi}_{I}(\sigma_n)}\}_{I\in \mathcal{I}(n,k,A)}$. Denote by $N$ its number of vertices and denote by $D-1$ its maximal degree. By Lemma \ref{lemma_card} and Lemma \ref{lemma_maxdegree}, there exist $C_1,C_2>0$ such that:
	\begin{align*}
	N\leq C_1n^j,\quad
	D\leq C_2n^{j-1}.
	\end{align*}
	By Theorem \ref{thm_varasymp} (whose proof is postponed), we have $v_n\asymp n^{2j-1}$. Since $|X^{\pi}_{I}(\sigma_n)-e_I|\leq 1$, there exists $C_3>0$ such that for all $I\in \mathcal{I}(n,k,A)$:
	\begin{align*}
	|\overline{X^{\pi}_I(\sigma_n)}|\leq C_3n^{1/2-j}.
	\end{align*}
	We now use Theorem \ref{thm_Janson&FN} to estimate the cumulants of $\overline{X^{(\pi,A)}(\sigma_n)}$. We use the following simple inequality for factorials: $r^r\leq r!e^r$, valid for $r\geq 1$. For any $r\geq 3$, we obtain that there exists $C>0$ such that:
	\begin{align*}
	\Bigr|\kappa^{(r)}\Bigr(\overline{X^{(\pi,A)}(\sigma_n)}\Bigr)\Bigr|
	&\leq 2^{r-1}r^{r-2}(C_1n^j)(C_2n^{j-1})^{r-1}(C_3n^{1/2-j})^r\\
	&\leq C^rr!n^{-(r-2)/2}\\
	&\leq \frac{r!}{\Delta^{r-2}},
	\end{align*}
	where $\Delta=C^{-3}n^{1/2}$. Here we used that $r/(r-2)\leq 3$ for all $r\geq 3$. Since condition $(S_{\gamma,\Delta})$ is satisfied with $\Delta$ and $\gamma=0$, by Theorem \ref{thm_Saulis}, we obtain: 
	\begin{equation*}
	d_K\Bigr(\overline{X^{(\pi,A)}(\sigma_n)},Z\Bigr)=\mathcal{O}(n^{-1/2}).\\[-5ex]
	\end{equation*} 
\end{proof}

\section{Variance estimate}\label{sec_var}
The main result of this section is the following theorem about the asymptotic behaviour of the variance of $X^{(\pi,A)}(\sigma_n)$ (we still assume $|\pi|\geq 2$). This result has been used in both proofs of Theorem \ref{thm_CLT} given above.

\begin{theorem}\label{thm_varasymp}
	For a vincular pattern $(\pi,A)$ with $j$ blocks and for $\sigma_n$ uniform in $S_n$, there exists $C>0$ such that:
	\begin{align*}
	\Var(X^{(\pi,A)}(\sigma_n))\sim Cn^{2j-1}.
	\end{align*}
\end{theorem}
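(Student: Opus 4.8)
The plan is to combine a polynomiality argument with a recursive lower bound, following the outline sketched in the introduction. First I would write the variance as a double sum of covariances over the index set $\mathcal{I}(n,k,A)$ with $k=|\pi|$:
\[
\Var(X^{(\pi,A)}(\sigma_n))=\sum_{I,I'\in\mathcal{I}(n,k,A)}\Cov\bigl(X^{\pi}_{I}(\sigma_n),X^{\pi}_{I'}(\sigma_n)\bigr).
\]
By Lemma \ref{lemma_indep1}, every summand with $I\cap I'=\emptyset$ vanishes, so only intersecting pairs contribute. Using the representation $\sigma_n=\red(U_1\dots U_n)$ and the exchangeability it provides, the covariance of an intersecting pair depends only on the order-isomorphism type of the configuration $(I,I')$ — that is, on its relative arrangement and its pattern of shared positions — and not on the absolute positions in $[n]$. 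There are finitely many such types (bounded in terms of $k$), each contributing a constant covariance, and the number of pairs realizing a fixed type is a polynomial in $n$, counted exactly as in Lemma \ref{lemma_card}. Hence $\Var(X^{(\pi,A)}(\sigma_n))$ is a polynomial in $n$.

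Next I would pin down its degree. By Lemma \ref{lemma_card} there are $\asymp n^{j}$ choices for $I$, and by Lemma \ref{lemma_maxdegree} each $I$ meets $\asymp n^{j-1}$ others, so the number of intersecting pairs is $\asymp n^{2j-1}$; since every covariance is bounded, the variance is a polynomial of degree at most $2j-1$. As the variance of a real random variable it is nonnegative for all large $n$, so its leading coefficient satisfies
\[
\Var(X^{(\pi,A)}(\sigma_n))=Cn^{2j-1}+\mathcal{O}(n^{2j-2}),\qquad C\geq 0.
\]
By the remark on polynomials following Definition \ref{def_order}, the theorem is now equivalent to showing $C>0$, and for this it suffices to produce \emph{any} lower bound growing strictly faster than $n^{2j-2}$: if I can show $\Var(X^{(\pi,A)}(\sigma_n))\geq C'n^{2j-3/2}$ for some $C'>0$ and all large $n$, then $C=0$ would force the variance to be $\mathcal{O}(n^{2j-2})$, a contradiction.

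The lower bound is where the law of total variance enters, and this is the heart — and the main obstacle — of the proof. The idea is to apply the general decomposition of Proposition \ref{prop_General_LTV} to $Y=X^{(\pi,A)}(\sigma_n)$ with conditioning variables $J_1,\dots,J_m$ that reveal the underlying i.i.d.\ values $U_1,\dots,U_n$ in stages, roughly one stage per block of the pattern. Since every term in Proposition \ref{prop_General_LTV} is nonnegative, I may discard all but one and still retain a valid lower bound. Choosing the stages so that conditioning on one block's data turns the count of $j$-block occurrences into (essentially) a count of $(j-1)$-block occurrences, I would set up an induction on the number of blocks: assuming a bound of order $n^{2(j-1)-3/2}$ for every vincular pattern with $j-1$ blocks, I aim for a recursion of the shape
\[
\Var(X^{(\pi,A)}(\sigma_n))\;\geq\;c\,n^{2}\,\Var(X^{(\pi',A')}(\sigma_n)),
\]
where $(\pi',A')$ is a vincular pattern with $j-1$ blocks; this multiplies the inductive estimate by the factor $n^{2}$ by which the variance orders differ, yielding $\Var\geq c\,n^{2}\cdot n^{2(j-1)-3/2}=c'n^{2j-3/2}$. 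The base case $j=1$ is a tight pattern and must be handled directly to give a bound of order $n^{1/2}=n^{2\cdot 1-3/2}$.

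The delicate points, which I expect to consume most of the work, are threefold. First, one must choose the conditioning $J_1,\dots,J_m$ so that a single surviving term of Proposition \ref{prop_General_LTV} genuinely reproduces a $(j-1)$-block variance up to the factor of order $n^{2}$, while the occurrences straddling several stages are controlled rather than destroying the recursion. Second, one must establish the base case with a strictly positive constant, i.e.\ an anti-concentration estimate showing that the single-block fluctuation does not degenerate; this is precisely the non-vanishing that a direct signed-binomial computation as in \cite{Bona1} makes hard, and it is where the loss of a factor $\sqrt{n}$ (hence the exponent $2j-3/2$ rather than $2j-1$) is absorbed. Third, one must keep all constants uniform in $n$ so that the recursion can be iterated the bounded number $j$ of times. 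Once the lower bound $C'n^{2j-3/2}$ is in hand, the conclusion $\Var(X^{(\pi,A)}(\sigma_n))\sim Cn^{2j-1}$ follows at once from the polynomiality established in the first two steps.
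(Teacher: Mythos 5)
Your first half is sound and coincides with the paper's: the covariance decomposition over intersecting pairs, the classification of pairs $(I,J)$ into finitely many order-isomorphism types each counted by a polynomial in $n$ (this is Lemma \ref{lemma_poly}), the degree bound $2j-1$, and the reduction of the theorem to producing \emph{any} lower bound of order strictly larger than $n^{2j-2}$, e.g.\ $C'n^{2j-3/2}$, are all exactly the paper's strategy. The gap is in the lower bound itself, which is the heart of the proof. Your proposed mechanism is an induction on the number of blocks via a multiplicative recursion $\Var(X^{(\pi,A)}(\sigma_n))\geq c\,n^{2}\,\Var(X^{(\pi',A')}(\sigma_n))$ with $(\pi',A')$ having $j-1$ blocks. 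This recursion is never derived, and it is unclear that it can be: once you condition on the uniforms underlying one block, the surviving term of Proposition \ref{prop_General_LTV} involves $\E[Y\mid J_1,\dots,J_i]$, which is an explicit deterministic function of the revealed uniforms (a binomial coefficient times a polynomial in the $U_i$'s, as computed in Section \ref{sec_Lbound}) --- it is not a pattern-counting statistic for a $(j-1)$-block pattern, so no object of the form ``$n^{2}$ times a $(j-1)$-block variance'' appears. Moreover your base case $j=1$, a bound of order $n^{1/2}$ for tight patterns, is deferred to ``handled directly'' but is not easier than the general case; it is precisely the non-degeneracy statement that the signed-binomial approach fails to deliver and that this section is designed to prove.

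The paper's recursion runs over $n$, not over $j$. Writing $Y=B+C$ with $B=X^{(\pi,A)}(\sigma_{n-1})$ the occurrences avoiding position $n$, and conditioning on the last $b_j$ uniforms, the general law of total variance yields (Eq.\ \eqref{eq_rec6})
\begin{align*}
v_n\;\geq\;v_{n-1}\bigl(1-C_5 n^{-1/2}\bigr)+C_3 n^{2j-2},
\end{align*}
where the additive gain $C_3n^{2j-2}$ comes from the non-degeneracy of a single conditional-expectation term (Eq.\ \eqref{eq_term4}) and the multiplicative loss comes from the cross term $\E[\Cov(B,C\mid U_n,\dots,U_{n-b_j+1})]$, controlled by Cauchy--Schwarz together with the crude a priori bound $v_n\geq C_4n^{2j-2}$. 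Iterating over the last $\approx\sqrt{n}$ values of $n$ accumulates $\sqrt{n}\cdot n^{2j-2}=n^{2j-3/2}$ before the decay factor bites --- that is where the exponent $2j-3/2$ actually comes from, uniformly in $j$ and with $j=1$ included. As written, your proposal establishes the polynomiality and the upper bound but leaves the lower bound, and hence the positivity of the leading coefficient, unproved.
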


The proof consists in two steps. First, we show in Section \ref{sec_Ubound} that $\Var(X^{(\pi,A)}(\sigma_n))$ is a polynomial in $n$ of degree at most $2j-1$. This immediately implies that $Cn^{2j-1}$ is an upper bound for the variance (see Corollary \ref{cor_Ubound}). The second step is to find a lower bound of the same form (see Proposition \ref{prop_Lbound}). The lower bound is more important for the CLT result, but it does not follow from the polynomiality. To find it, we present in Section \ref{sec_Lbound} a proof technique building a recurrence from the law of total variance.  

\subsection{Polynomiality and upper bound}\label{sec_Ubound}
Using the sum decomposition of $X^{(\pi,A)}(\sigma_n)$ (see Eq$.$ \eqref{eq_sumdecomp}, p.~\pageref{eq_sumdecomp}) and Lemma \ref{lemma_indep1}, we have:
\begin{align}\label{eq_var}
\Var(X^{(\pi,A)}(\sigma_n))=\sum_{\substack{I,J\in \mathcal{I}(n,k,A): \\ I\cap J\neq \emptyset}}\Cov(X^{\pi}_{I}(\sigma_n), X^{\pi}_{J}(\sigma_n)).
\end{align}
By Lemma \ref{lemma_indep1}, the covariances are $0$ for any $I,J\in \mathcal{I}(n,k,A)$ that do not intersect, explaining the summation index in the above formula.

We use this expression to prove that $\Var(X^{(\pi,A)}(\sigma_n))$ is a polynomial in $n$.
	
\begin{lemma}\label{lemma_poly}
	Let $(\pi,A)$ be a fixed vincular pattern of size $k$ with $j$ blocks and, for $n\geq 1$, let $\sigma_n$ be uniform in $S_n$. Then, for $n\geq 2(k-j)$, $\Var(X^{(\pi,A)}(\sigma_n))$ is a polynomial in $n$ whose degree is at most $2j-1$.
\end{lemma}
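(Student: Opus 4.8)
The plan is to work directly from the covariance expansion in Equation~\eqref{eq_var} and to show that the sum there organizes into finitely many \emph{configuration types}, each contributing a polynomial in $n$ of controlled degree.

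First I would argue that the covariance $\Cov(X^\pi_I(\sigma_n), X^\pi_J(\sigma_n))$ depends only on the combinatorial overlap pattern of the pair $(I,J)$, and not on the ambient size $n$ nor on the absolute positions. Using the representation $\sigma_n = \red(U_1 \dots U_n)$ from Lemma~\ref{lemma_red(U1,...,Un)} with the $U_i$ i.i.d.\ uniform, the indicator $X^\pi_I$ is a function of the relative order of $(U_i)_{i \in I}$ alone, and likewise $X^\pi_J$ of $(U_i)_{i \in J}$. Hence the joint law of the pair $(X^\pi_I, X^\pi_J)$ is determined entirely by the \emph{type} $\tau$ of the pair, namely the relative order of the positions in $I \cup J$ together with the record of which positions are shared by $I$ and $J$; the actual values of the positions are irrelevant because the $U_i$ are exchangeable. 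I would phrase this as: there is a constant $c_\tau$, independent of $n$, with $\Cov(X^\pi_I, X^\pi_J) = c_\tau$ for every pair of type $\tau$.

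Next I would count, for each type $\tau$, the number $N_\tau(n)$ of pairs $(I,J)$ of that type inside $\mathcal{I}(n,k,A)$. Merging the $j$ blocks of $I$ with the $j$ blocks of $J$ that overlap or touch, the union $I \cup J$ splits into $m_\tau$ maximal runs of consecutive positions whose internal structure is fixed by $\tau$. Placing these runs in $[n]$ in the prescribed order with the mandatory separating gaps is the same stars-and-bars count as in the proof of Lemma~\ref{lemma_card}: for $n$ large it equals a binomial coefficient, hence a polynomial in $n$ of degree exactly $m_\tau$. The crucial degree bound is $m_\tau \le 2j-1$: since $I \cap J \neq \emptyset$ (the only pairs retained in \eqref{eq_var}, by Lemma~\ref{lemma_indep1}), at least one block of $I$ and one block of $J$ share a position and therefore lie in a common run, so the $2j$ blocks collapse into at most $2j-1$ runs. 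As the number of types is finite and independent of $n$, summing gives $\Var(X^{(\pi,A)}(\sigma_n)) = \sum_\tau c_\tau N_\tau(n)$, a polynomial in $n$ of degree at most $2j-1$.

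The hypothesis $n \ge 2(k-j)$ enters only to guarantee that each per-type count is genuinely given by its polynomial formula rather than truncated to zero: it is exactly the threshold beyond which the most-overlapping admissible pair still fits in $[n]$, keeping the relevant binomial count in its polynomial range. The main obstacle I anticipate is the bookkeeping for the type equivalence, that is, making precise which data $\tau$ must record so that simultaneously the covariance is constant and the count is exactly polynomial, and then verifying the run-merging degree bound $m_\tau \le 2j-1$ uniformly over all overlap patterns, including those where several blocks merge at once. Everything else, namely the constancy of $c_\tau$ and the polynomiality of each $N_\tau$, reduces to exchangeability and the counting already carried out in Lemma~\ref{lemma_card}.
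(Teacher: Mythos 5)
Your proposal is correct and takes essentially the same approach as the paper: your configuration types $\tau$ are exactly the paper's covariance-groups indexed by $(t,f_{I,J})$ with $t=|I\cup J|$ and $f_{I,J}$ recording which positions come from $I$, $J$ or both, the constancy of $c_\tau$ follows from exchangeability just as there, and the per-type count reduces via the same bijection to Lemma~\ref{lemma_card}, giving $\binom{n-|A_f|}{t-|A_f|}$ with the degree bound $t-|A_f|\leq 2j-1$ from the same block-merging observation for intersecting pairs. The only caution is that the runs you merge should be the blocks forced by the \emph{required} adjacencies inherited from $A$ (the paper's $A_f$), not maximal runs of positions that merely happen to be consecutive, but this is precisely the bookkeeping you already flag and it resolves as in the paper.
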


\begin{proof}
	The proof idea is to split the sum in Eq$.$ \eqref{eq_var} according to $I,J$ that have the same covariances, and then to count the number of pairs $(I,J)$ in each of these covariance-groups. We will see that the cardinalities of all those groups are polynomials in $n$ and that the number of groups and the covariance values do not depend on $n$. Then, this implies that also $\Var(X^{(\pi,A)}(\sigma_n))$ is a polynomial in $n$.
	
	Let $I,J\in \mathcal{I}(n,k,A)$, $I=\{i_1,i_2,\dots,i_k\}$ with $i_1<i_2<\dots<i_k$ and $J=\{j_1,j_2,\dots,j_k\}$ with $j_1<j_2<\dots<j_k$. To split the sum, we consider $I\cup J=\{u_1,u_2,\dots,u_t\}$ with $u_1<u_2<\dots<u_t$ and $k\leq t\leq 2k$. Note that $t=2k$ if and only if $I\cap J=\emptyset$. Depending on the intersections between $I$ and $J$, $I\cup J$ looks different, see Fig$.$ \ref{fig_lemma_poly}.
	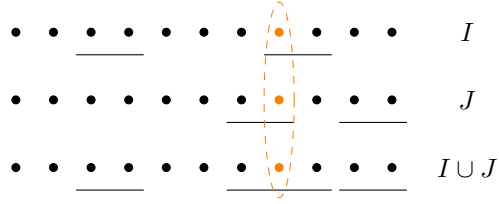
\begin{figure}[ht]
	\begin{center}
		\begin{tikzpicture}
		\filldraw [black] (0,1.9) circle (1.5pt);
		\filldraw [black] (0.5,1.9) circle (1.5pt);
		\filldraw [black] (1,1.9) circle (1.5pt);
		\filldraw [black] (1.5,1.9) circle (1.5pt);
		\filldraw [black] (2,1.9) circle (1.5pt);
		\filldraw [black] (2.5,1.9) circle (1.5pt);
		\filldraw [black] (3,1.9) circle (1.5pt);
		\filldraw [orange] (3.5,1.9) circle (1.5pt);
		\filldraw [black] (4,1.9) circle (1.5pt);
		\filldraw [black] (4.5,1.9) circle (1.5pt);
		\filldraw [black] (5,1.9) circle (1.5pt);
		\filldraw [black] (0,1) circle (1.5pt);
		\filldraw [black] (0.5,1) circle (1.5pt);
		\filldraw [black] (1,1) circle (1.5pt);
		\filldraw [black] (1.5,1) circle (1.5pt);
		\filldraw [black] (2,1) circle (1.5pt);
		\filldraw [black] (2.5,1) circle (1.5pt);
		\filldraw [black] (3,1) circle (1.5pt);
		\filldraw [orange] (3.5,1) circle (1.5pt);
		\filldraw [black] (4,1) circle (1.5pt);
		\filldraw [black] (4.5,1) circle (1.5pt);
		\filldraw [black] (5,1) circle (1.5pt);
		\filldraw [black] (0,0.1) circle (1.5pt);
		\filldraw [black] (0.5,0.1) circle (1.5pt);
		\filldraw [black] (1,0.1) circle (1.5pt);
		\filldraw [black] (1.5,0.1) circle (1.5pt);
		\filldraw [black] (2,0.1) circle (1.5pt);
		\filldraw [black] (2.5,0.1) circle (1.5pt);
		\filldraw [black] (3,0.1) circle (1.5pt);
		\filldraw [orange] (3.5,0.1) circle (1.5pt);
		\filldraw [black] (4,0.1) circle (1.5pt);
		\filldraw [black] (4.5,0.1) circle (1.5pt);
		\filldraw [black] (5,0.1) circle (1.5pt);
		\draw (0.8,1.6) -- (1.7,1.6);
		\draw (3.3,1.6) -- (4.2,1.6);
		\draw (2.8,0.7) -- (3.7,0.7);
		\draw (4.3,0.7) -- (5.2,0.7);
		\draw (0.8,-0.2) -- (1.7,-0.2);
		\draw (2.8,-0.2) -- (4.2,-0.2);
		\draw (4.3,-0.2) -- (5.2,-0.2);
		\draw (6,1.9) node {$I$};
		\draw (6,1) node {$J$};
		\draw (6,0.1) node {$I\cup J$};
		\draw [dashed][orange] (3.5,1) ellipse (0.2cm and 1.3cm);
		\end{tikzpicture}		
	\end{center}
	\caption{Block structure of $I\cup J$.} \label{fig_lemma_poly}
	\end{figure}
	Each $u_s$ in $I\cup J$ comes either from $I$, or from $J$, or from $I\cap J$ here shown in orange. The following function $f_{I,J}:[t]\rightarrow \{I,J,(I,J)\}$ gives to each position $s$ the origin of $u_s$:
	\begin{align*}
	f_{I,J}(s)=
	\left\{
	\begin{array}{ll}
	I & \text{if } u_s\in I \text{ but } u_s\notin J\\
	J & \text{if } u_s\in J \text{ but } u_s\notin I\\
	(I,J) & \text{if } u_s\in I\cap J\\
	\end{array} .
	\right.
	\end{align*}
	Since $\sigma_n$ is uniform, $\Cov(X^{\pi}_{I}(\sigma_n), X^{\pi}_{J}(\sigma_n))$ depends on $I,J$ only through the positions of the intersections of $I$ and $J$. In the example of Fig$.$ \ref{fig_lemma_poly}, $I$ and $J$ intersect at one position only: position $3$ in $I$ and $2$ in $J$. In particular, when two functions $f_{I,J}$ and $f_{I',J'}$ are the same, then the covariances $\Cov(X^{\pi}_{I}(\sigma_n), X^{\pi}_{J}(\sigma_n))$ and $\Cov(X^{\pi}_{I'}(\sigma_n), X^{\pi}_{J'}(\sigma_n))$ are the same.
	We split the sum:
	\begin{align*}
	\sum_{\substack{I,J\in \mathcal{I}(n,k,A): \\ I\cap J\neq \emptyset}}\Cov(X^{\pi}_{I}(\sigma_n), X^{\pi}_{J}(\sigma_n))=\sum_{t=k}^{2k-1}\sum_{f:[t]\rightarrow \{I,J,(I,J)\}}\sum_{\substack{I,J\in \mathcal{I}(n,k,A): \\ |I\cup J|=t,\; f_{I,J}=f}}\Cov(X^{\pi}_{I}(\sigma_n), X^{\pi}_{J}(\sigma_n)).
	\end{align*}
	Each pair $(t,f)$ defines a covariance-group. The number of such pairs and the covariance associated to a pair $(t,f)$ are independent of $n$. To count the size of the groups, we consider the blocks of $I\cup J$. Similarly to patterns, $I$ and $J$ come in blocks. The inherited block structure of $I\cup J$ has three different types of blocks: blocks from $I$, blocks from $J$ and \emph{merged} blocks, coming both from $I$ and $J$. In Fig$.$ \ref{fig_lemma_poly}, the block of size $3$ in $I\cup J$ is a merged block. Instead of blocks, it is equivalent to describe the adjacencies of $I\cup J$. As the blocks, they are inherited from $I$, or from $J$, or both from $I$ and $J$. Neighbouring (as in Fig$.$ \ref{fig_lemma_poly}) or shared adjacencies from $I$ and $J$ take care of the block merges. If $u_s\in I$, then define $r_I(s)$ as the index $\ell$ such that $u_s=i_\ell$ and similarly for $r_J$.
	For a function $f:[t]\rightarrow \{I,J,(I,J)\}$, we define the set $A_f\subseteq [t-1]$ by:
	\begin{align*}
	s\in A_f\iff
	\begin{cases}
	f(s)\in \{I,(I,J)\} \text{ and } r_I(s)\in A,\\
	\text{or } f(s)\in \{J,(I,J)\} \text{ and } r_J(s)\in A.
	\end{cases}
	\end{align*}
	If $I,J\in \mathcal{I}(n,k,A)$ with $|I\cup J|=t$ and $f_{I,J}=f$, then $A_f$ is the set of adjacencies of $I\cup J$. More precisely, then $I\cup J\in \mathcal{I}(n,t,A_f)$. But not every $f$ can occur this way since it has to respect the block structure of $I$ and $J$ given by $A$. For example, $f(s)=I$ or $f(s)=(I,J)$ with $r_I(s)\in A$ but $f(s+1)=J$ for some $s$, is not valid. For a fixed pair $(t,f)$, one of the two following cases occurs:
	\begin{itemize}
		\item There are no $I,J\in \mathcal{I}(n,k,A)$ with $f_{I,J}=f$.
		\item The map $\left\{ \begin{array}{rl}
			\{(I,J)\in \mathcal{I}(n,k,A)^2 |\; f_{I,J}=f\}&\rightarrow \mathcal{I}(n,t,A_f),\\
			(I,J)&\mapsto I\cup J,
		\end{array}
		\right.$ is a bijection.
		\\
		Indeed, if $K\in \mathcal{I}(n,t,A_f)$, then $(I,J)$ can be reconstructed from $K=I\cup J$ and $f=f_{I,J}$. By construction of $A_f$, $I$ and $J$ will be in $\mathcal{I}(n,k,A)$.
	\end{itemize}
	So, the cardinalities of the covariance-groups are either $0$ or given by the cardinality of the corresponding $\mathcal{I}(n,t,A_f)$. By Lemma \ref{lemma_card}, for $n\geq t-(t-|A_f|)=|A_f|$, we have:
	\begin{align*}
	|\mathcal{I}(n,t,A_f)|=\binom{n-t+(t-|A_f|)}{t-|A_f|}=\binom{n-|A_f|}{t-|A_f|}.
	\end{align*}
	This is a polynomial in $n$, since $t$ and $|A_f|$ do not depend on $n$. The maximal value $|A_f|$ can take is $2(k-j)$. So, for $n\geq 2(k-j)$, $\Var(X^{(\pi,A)}(\sigma_n))$ is the sum of polynomials in $n$, which is again a polynomial in $n$. Its maximal degree is the maximal value for $t-|A_f|$ (the number of blocks of $I\cup J$) which is $2j-1$.
\end{proof}

\begin{remark}
	This polynomiality result can be used to compute expressions of the variance for small patterns by polynomial interpolation (whence the desire to be precise on the range of values of $n$ for which the polynomiality holds).
\end{remark}

From Lemma \ref{lemma_poly}, we directly obtain an upper bound for $\Var(X^{(\pi,A)}(\sigma_n))$.

\begin{corollary}\label{cor_Ubound}
	Let $(\pi,A)$ be a fixed vincular pattern of size $k$ with $j$ blocks and let $\sigma_n$ be uniform in $S_n$. Then, there exists $C>0,n_0$ such that for $n\geq n_0$:
	\begin{align*}
	\Var(X^{(\pi,A)}(\sigma_n))\leq Cn^{2j-1}.
	\end{align*}
\end{corollary}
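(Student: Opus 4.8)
The plan is to read this bound off directly from the polynomiality already established in Lemma~\ref{lemma_poly}, so that essentially no new work is required. By that lemma, for $n\geq 2(k-j)$ the quantity $\Var(X^{(\pi,A)}(\sigma_n))$ agrees with a polynomial $p(n)=\sum_{i=0}^{2j-1}a_i n^i$ of degree at most $2j-1$ (some of the top coefficients may vanish, which is harmless). The first step is simply to fix this polynomial expression, keeping in mind that the identity is only guaranteed once $n$ is large enough.

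The second step is the elementary comparison of a polynomial with its highest allowed power. For $n\geq 1$ one has $n^i\leq n^{2j-1}$ whenever $0\leq i\leq 2j-1$, hence
\begin{align*}
\Var(X^{(\pi,A)}(\sigma_n))=p(n)\leq \sum_{i=0}^{2j-1}|a_i|\,n^i\leq \Bigr(\sum_{i=0}^{2j-1}|a_i|\Bigr)n^{2j-1}.
\end{align*}
Taking absolute values around the coefficients means the estimate does not depend on their signs, which is all that is needed for an upper bound. Setting $C=\sum_{i=0}^{2j-1}|a_i|$ and $n_0=\max\{1,\,2(k-j)\}$ then gives $\Var(X^{(\pi,A)}(\sigma_n))\leq Cn^{2j-1}$ for all $n\geq n_0$, as claimed.

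I do not expect any genuine obstacle here: the entire substance of the corollary is contained in Lemma~\ref{lemma_poly}, and what remains is only the standard fact that a polynomial of degree at most $d$ is $\mathcal{O}(n^d)$. The single point deserving a little care is the choice of threshold $n_0$: the polynomial identity for the variance is valid only from $n\geq 2(k-j)$ onwards, so $n_0$ must be chosen at least this large for the displayed bound to apply.
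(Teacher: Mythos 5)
Your argument is correct and is exactly how the paper obtains this corollary: it is stated as an immediate consequence of Lemma~\ref{lemma_poly}, with no further proof given. The only cosmetic point is that if all coefficients happened to vanish you should take $C=1$ rather than $C=0$ to respect the requirement $C>0$, but this is trivial.
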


\subsection{Lower bound}\label{sec_Lbound}
Our proof technique for finding a sharp lower bound on $\Var(X^{(\pi,A)}(\sigma_n))$ uses a recurrence that we obtain from the law of total variance. Working directly with the variance decomposition (see Eq$.$ \eqref{eq_var}) would be more difficult since covariances can be negative whereas the law of total variance involves only non-negative terms. We first discuss what conditioning we want to use in the law of total variance. Then, we show how to obtain the recurrence relation. And finally, we deduce a recursive estimation from which we then derive the lower bound.

For the rest of this section, let $(\pi,A)$ be a fixed vincular pattern of size $k$ with $j$ blocks. Let $U_1,U_2,\dots$ be independent and uniform in $[0,1]$. For any $n$, set $\sigma_n=\red(U_1\dots U_n)$. By Lemma \ref{lemma_red(U1,...,Un)}, $\sigma_n$ is uniform in $S_n$. Moreover, for any $n$, we define  $v_n=\Var(X^{(\pi,A)}(\sigma_n))$. For simplicity, we also set $Y=X^{(\pi,A)}(\sigma_n)$.

We apply the general law of total variance (see Proposition \ref{prop_General_LTV}) on $Y$ where we shall condition on the last few entries of $\sigma_n$. The number of these entries is the size of the last block of the pattern $(\pi,A)$, denoted $b_j$. More precisely, we condition on $U_n,U_{n-1},\dots,U_{n-b_j+1}$. We obtain the following expression for $v_n$:
\begin{equation}\label{eq_rec1}
\begin{split}
v_n=\Var(Y)
&=\E[\Var(Y|U_n,\dots,U_{n-b_j+1})]\\
&\hspace{0.5cm}+\sum_{i=0,\dots,b_j-1}\E\big[\Var\big(\E[Y|U_n,\dots,U_{n-i}]\big|U_n,\dots,U_{n-i+1}\big)\big].
\end{split}
\end{equation}

We now turn to the recurrence where we will see why this conditioning is a good choice. We split $Y$ in two parts as follows: $Y=B+C$ with
\begin{align*}
B=\sum_{I\in \mathcal{I}(n,k,A):\; n\notin I}X^{\pi}_{I}(\sigma_n),\quad
C=\sum_{I\in \mathcal{I}(n,k,A):\; n\in I}X^{\pi}_{I}(\sigma_n).
\end{align*}

Observe that $B=X^{(\pi,A)}(\sigma_{n-1})$, since $\sigma_n$ is uniform in $S_n$ and $\sigma_{n-1}$ is uniform in $S_{n-1}$. Applying Proposition \ref{prop_General_LTV} on $B$, with the conditions $U_{n-1},\dots,U_{n-b_j+1}$, we have:
\begin{equation}\label{eq_rec2}
\begin{split}
v_{n-1}=\Var(B)
&=\E[\Var(B|U_{n-1},\dots,U_{n-b_j+1})]\\
&\hspace{0.5cm}+\sum_{i=0,\dots,b_j-2}\E\big[\Var\big(\E[B|U_{n-1},\dots,U_{n-i-1}]\big|U_{n-1},\dots,U_{n-i}\big)\big].
\end{split}
\end{equation}

In the expression for $v_n$, Eq$.$ \eqref{eq_rec1}, we want to recover $v_{n-1}$ from $\E[\Var(Y|U_n,\dots,U_{n-b_j+1})]$. Since $Y=B+C$ and since $B$ is independent of $U_n$, we have:
\begin{equation}\label{eq_rec3}
\begin{split}
\E[\Var(Y|U_n,\dots,U_{n-b_j+1})]
&=\E[\Var(B|U_{n-1},\dots,U_{n-b_j+1})]\\
&\hspace{0.5cm}+\E[\Var(C|U_n,\dots,U_{n-b_j+1})]\\
&\hspace{0.5cm}+2\E[\Cov(B,C|U_n,\dots,U_{n-b_j+1})].
\end{split}
\end{equation}

Because $\E[\Var(B|U_{n-1},\dots,U_{n-b_j+1})]$ also appears in the expression for $v_{n-1}$, using Eqs$.$ \eqref{eq_rec1}, \eqref{eq_rec2} and \eqref{eq_rec3}, we obtain the following recurrence relation:
\begin{equation}\label{eq_recrel}
\begin{split}
v_n-v_{n-1}
&=2\E[\Cov(B,C|U_n,\dots,U_{n-b_j+1})]\\
&\hspace{0.5cm}+\E[\Var(C|U_n,\dots,U_{n-b_j+1})]\\
&\hspace{0.6cm}+\Bigr(\sum_{i=0,\dots,b_j-2}\E\big[\Var\big(\E[Y|U_n,\dots,U_{n-i}]\big|U_n,\dots,U_{n-i+1}\big)\big]\\
&\hspace{2cm}-\sum_{i=0,\dots,b_j-2}\E\big[\Var\big(\E[B|U_{n-1},\dots,U_{n-i-1}]\big|U_{n-1},\dots,U_{n-i}\big)\big]\Bigr)\\
&\hspace{0.5cm}+\E\big[\Var\big(\E[Y|U_n,\dots,U_{n-b_j+1}]\big|U_n,\dots,U_{n-b_j+2}\big)\big].
\end{split}
\end{equation}

The right-hand side of Eq$.$ \eqref{eq_recrel} is grouped in four terms. To find a lower bound with the help of this recurrence relation, we examine all these terms. We will need the following result which can be proved very similarly to Lemma \ref{lemma_indep1}. 

\begin{lemma}\label{lemma_indep2}
	Let $(\pi,A)$ be a vincular pattern of size $k$ with $j$ blocks and a last block of size $b_j$. Let $U_1,\dots,U_n$ be independent and uniform in $[0,1]$ and let $\sigma_n=\red(U_1\dots U_n)$. Then, for $I,J\in \mathcal{I}(n,k,A)$ with $I\cap J\subseteq\{n-b_j+1,\dots,n\}$, conditionally on $U_{n-b_j+1},\dots,U_n$, the random variables $X^{(\pi,A)}_{I}(\sigma_n)$ and $X^{(\pi,A)}_{J}(\sigma_n)$ are independent.
\end{lemma}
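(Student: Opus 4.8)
The plan is to mirror the proof of Lemma \ref{lemma_indep1} almost verbatim, replacing the hypothesis of globally disjoint index sets by the weaker hypothesis that the index sets overlap only inside the conditioned window $\{n-b_j+1,\dots,n\}$, and to show that once we fix the values $U_{n-b_j+1},\dots,U_n$, the two indicators become functions of disjoint families of independent random variables. The underlying representation is again $\sigma_n=\red(U_1\dots U_n)$ with the $U_i$ i.i.d.\ uniform on $[0,1]$, as provided by Lemma \ref{lemma_red(U1,...,Un)}.

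First I would record the key structural observation: whether $\sigma_n$ has an occurrence of the classical pattern $\pi$ at a prescribed index set $I=\{i_1<\dots<i_k\}$ is determined by the relative order of $U_{i_1},\dots,U_{i_k}$, i.e.\ it is a measurable function of the tuple $(U_i)_{i\in I}$. Hence $X^{\pi}_{I}(\sigma_n)$ depends only on $\{U_i\}_{i\in I}$ and likewise $X^{\pi}_{J}(\sigma_n)$ depends only on $\{U_i\}_{i\in J}$. (Note the statement writes $X^{(\pi,A)}_{I}$, but this is the same indicator $X^{\pi}_{I}$ from the sum decomposition \eqref{eq_sumdecomp}.) Next I would condition: fix a realization of $(U_m)_{m\in W}$ with $W=\{n-b_j+1,\dots,n\}$. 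By hypothesis $I\cap J\subseteq W$, so the ``free'' indices $I\setminus W$ and $J\setminus W$ are disjoint, and they are also disjoint from $W$. Therefore, conditionally on $(U_m)_{m\in W}$, the indicator $X^{\pi}_{I}(\sigma_n)$ is a function of the independent family $\{U_i\}_{i\in I\setminus W}$ (together with the now-fixed constants $(U_m)_{m\in I\cap W}$), and $X^{\pi}_{J}(\sigma_n)$ is a function of the disjoint independent family $\{U_i\}_{i\in J\setminus W}$.

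The conclusion then follows from the fact that functions of disjoint subsets of an independent family are independent, applied after conditioning: since $\{U_i\}_{i\in I\setminus W}$, $\{U_i\}_{i\in J\setminus W}$ and $(U_m)_{m\in W}$ are three mutually independent families, the two conditional laws factorize. Concretely, for any Borel $a,b$,
\begin{align*}
\P\big(X^{\pi}_{I}=a,\,X^{\pi}_{J}=b \,\big|\, (U_m)_{m\in W}\big)
=\P\big(X^{\pi}_{I}=a \,\big|\, (U_m)_{m\in W}\big)\cdot\P\big(X^{\pi}_{J}=b \,\big|\, (U_m)_{m\in W}\big),
\end{align*}
which is exactly conditional independence given $U_{n-b_j+1},\dots,U_n$.

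I do not expect a serious obstacle here, since this is a routine adaptation of Lemma \ref{lemma_indep1}; the only point requiring genuine care is the bookkeeping around the shared indices in $I\cap W$ and $J\cap W$. These indices are \emph{not} free after conditioning, so one must be careful to argue that each indicator splits cleanly into a part depending on already-fixed variables and a part depending on its own private free variables. The cleanest way to phrase this rigorously is to invoke the standard grouping lemma for independent random variables—if $\{U_i\}_{i\in[n]}$ is independent and $[n]$ is partitioned into blocks, then any random variables that are each measurable with respect to a single block are mutually independent—and then apply it to the partition $\{I\setminus W,\ J\setminus W,\ W\}$ (the disjointness of the first two being precisely the hypothesis $I\cap J\subseteq W$), reading off the asserted conditional independence.
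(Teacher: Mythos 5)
Your proof is correct and is essentially the argument the paper intends: the paper itself only remarks that Lemma~\ref{lemma_indep2} ``can be proved very similarly to Lemma~\ref{lemma_indep1}'', namely via the representation $\sigma_n=\red(U_1\dots U_n)$ and the observation that each indicator depends only on $(U_i)_{i\in I}$, so that after conditioning on the window $W=\{n-b_j+1,\dots,n\}$ the remaining free index sets $I\setminus W$ and $J\setminus W$ are disjoint. Your spelled-out version, including the grouping-lemma bookkeeping for the shared indices in $W$, is exactly that adaptation.
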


Lemma \ref{lemma_indep2} is the reason why we want to condition on the last $b_j$ entries of $\sigma_n$. Moreover, we need the following definition in order to work with sorted sequences.

\begin{definition}
	Let $x_1x_2\dots x_n$ be a sequence of $n$ distinct real numbers. Then, $sort(x_1\dots x_n)$ is the \emph{sorted} sequence which contains $x_1,x_2,\dots,x_n$ but in increasing order.
\end{definition}

We now examine separately the four terms of the recurrence relation (see Eq$.$ \eqref{eq_recrel}). We want to find lower bounds for each of them. The following computations hold for $n$ large enough.

\paragraph{\textbf{First term}} First, we have:
\begin{align*}
\Var(C|U_n,\dots,U_{n-b_j+1})=\sum_{I,J\in \mathcal{I}(n,k,A):\; n\in I\cap J}\Cov(X^{\pi}_{I}(\sigma_n),X^{\pi}_{J}(\sigma_n)|U_n,\dots,U_{n-b_j+1}).
\end{align*}
The constraint $n\in I\cap J$ means that $I$ and $J$ intersect at least in the whole last block. By Lemma \ref{lemma_indep2}, for the above covariances to be non-zero, $I$ and $J$ must intersect at least in one more block. Similar arguments as in the proof of Lemma \ref{lemma_maxdegree} show that the number of non-zero covariances is $\mathcal{O}(n^{2j-3})$. Since the covariances are bounded by $1$, this implies:
\begin{align*}
\Var(C|U_n,\dots,U_{n-b_j+1})=\mathcal{O}(n^{2j-3}).
\end{align*} 
Note that the constant in the $\mathcal{O}$-term does not depend on $U_n,\dots,U_{n-b_j+1}$, which is important when we take the expectation. Then, by the Cauchy-Schwarz inequality, it holds that:
\begin{equation*}
|\Cov(B,C|U_n,\dots,U_{n-b_j+1})|\leq \Var(B|U_n,\dots,U_{n-b_j+1})^{1/2}\mathcal{O}(n^{j-3/2}).
\end{equation*}
And by the Jensen inequality:
\begin{equation*}
\E[|\Cov(B,C|U_n,\dots,U_{n-b_j+1})|]\leq \E[\Var(B|U_n,\dots,U_{n-b_j+1})]^{1/2}\mathcal{O}(n^{j-3/2}).
\end{equation*}
Equation \eqref{eq_rec2} implies that $\E[\Var(B|U_n,\dots,U_{n-b_j+1})]\leq v_{n-1}$, so that:
\begin{equation*}
\E[|\Cov(B,C|U_n,\dots,U_{n-b_j+1})|]\leq v_{n-1}^{1/2}\mathcal{O}(n^{j-3/2}).
\end{equation*}
Finally, there exists $C_1>0$ such that:
\begin{equation}\label{eq_term1}
2\E[\Cov(B,C|U_n,\dots,U_{n-b_j+1})]\geq -C_1n^{j-3/2}v_{n-1}^{1/2}.
\end{equation}

\paragraph{\textbf{Second term}} We will simply use the trivial inequality
\begin{equation}\label{eq_term2}
\E[\Var(C|U_n,\dots,U_{n-b_j+1})]\geq 0.
\end{equation}

\paragraph{\textbf{Third term}} For $0\leq m\leq b_j-1$, define 
\begin{align*}
&A_m=\sum_{\substack{I\in \mathcal{I}(n,k,A): \\ n,\dots,n-m+1\notin I, n-m\in I} }X^{\pi}_{I}(\sigma_n),\\
&A_{b_j}=\sum_{\substack{I\in \mathcal{I}(n,k,A): \\ n,\dots,n-b_j+1\notin I} }X^{\pi}_{I}(\sigma_n).
\end{align*}
Clearly, $Y=A_0+A_1+\dots+A_{b_j}$, while $C=A_0$ and $B=A_1+A_2+\dots+A_{b_j}$. Note that for $0\leq m\leq b_j$, $A_m$ is independent of $U_n,\dots,U_{n-m+1}$. For any $0\leq i\leq b_j-1$, we have:
\begin{equation*}
\E[Y|U_n,\dots,U_{n-i}]=\sum_{m=0}^{i}\E[A_m|U_{n-m},\dots,U_{n-i}]+\sum_{m=i+1}^{b_j}\E[A_m].
\end{equation*}
Taking the variance, the second sum will not contribute since it is deterministic. For the first sum, \emph{i.e$.$} the case $0\leq m\leq i$, we compute $\E[A_m|U_{n-m},\dots,U_{n-i}]$. Before giving the general formula, we consider a simple example.
\\\\
We explain how to obtain a formula for the random variable $\E[A_0|U_{n},U_{n-1}]$. Consider the pattern $\pi=54\underline{231}$. Then, $A_0$ is the sum of the indicators $X^{\pi}_{I}(\sigma_n)$ for $I\in \mathcal{I}(n,5,\{3,4\})$ such that $n\in I$. This implies automatically that $n-2,n-1\in I$ due to the adjacencies of the given pattern. Assume $I=\{i_5,i_4,n-2,n-1,n\}$ with $i_5<i_4<n-2$. Then, we have:
\begin{align*}
\P\big(X^{\pi}_{I}(\sigma_n)=1\big|U_{n},U_{n-1}\big)
&=\P\big(\red(U_{i_5}U_{i_4}U_{n-2}U_{n-1}U_{n})=54231\big|U_{n},U_{n-1}\big)\\
&=\P(U_{n-1}>U_{n-2}>U_{n}|U_{n},U_{n-1})\P(U_{i_5}>U_{i_4}>U_{n-1}|U_{n-1})\I_{U_{n-1}>U_{n}}\\
&=(U_{n-1}-U_{n})\frac{(1-U_{n-1})^2}{2!}\I_{U_{n-1}>U_{n}},
\end{align*}
where $\I_x$ is the indicator function which is $1$ if and only if condition $x$ holds. We used that the $U_i$'s are independent and uniform in $[0,1]$. Since there are $\binom{n-3}{2}$ choices for $i_5$ and $i_4$, we have:
\begin{align*}
\E[A_0|U_{n},U_{n-1}]
&=\sum_{I\in \mathcal{I}(n,5,\{3,4\}):\; n\in I}\P\big(X^{\pi}_{I}(\sigma_n)=1\big|U_{n},U_{n-1}\big)\\
&=\binom{n-3}{2}\Biggr((U_{n-1}-U_{n})\frac{(1-U_{n-1})^2}{2!}\I_{U_{n-1}>U_{n}}\Biggr).
\end{align*}
This is the explicit formula for the random variable $\E[A_0|U_{n},U_{n-1}]$.
\\

For the general formula, let $a_1\dots a_{1+i-m}=sort(\pi_{k-(i-m)}\dots\pi_k)$ (fixed, since $\pi$ is) and define $\ell_1,\dots,\ell_{1+i-m}$ such that $U_{\ell_1}\dots U_{\ell_{1+i-m}}=sort(U_{n-i}\dots U_{n-m})$ (not fixed). Then:
\begin{align*}
\E[A_m|U_{n-m},\dots,U_{n-i}]=B_m(n)\times S(U_{n-m},\dots,U_{n-i}),
\end{align*}
where
\begin{align*}
B_m(n)
&=\binom{n-m-k+j-1}{j-1},
\end{align*}
\begin{align*}
S(U_{n-m},\dots,U_{n-i})
&=\frac{U_{\ell_1}^{a_1-1}(U_{\ell_2}-U_{\ell_1})^{a_2-a_1-1}\cdots (1-U_{\ell_{1+i-m}})^{k-a_{1+i-m}}}{(a_1-1)!(a_2-a_1-1)!\cdots (k-a_{1+i-m})!} \I_{\substack{\red(U_{n-i}\dots U_{n-m}) \\ =\red(\pi_{k-(i-m)}\dots\pi_k)}}.
\end{align*}
The binomial coefficient counts the number of $I\in \mathcal{I}(n,k,A)$ fulfilling the constraints for $A_m$: this set is in bijection with $\mathcal{I}(n-b_j-m,k-b_j,A\cap [k-b_j])$ so that its cardinality is given by Lemma \ref{lemma_card}. For each such $I$, the probability that the corresponding values in $\sigma_n$ are in the good order is given by the fraction. The indicator takes care of the order of the given $U_i$'s.

\bigskip

We need these computations to compare the expectations (of $Y$ and of $B$) appearing in the third term in Eq$.$ \eqref{eq_recrel}. For $Y$, we have:
\begin{equation}\label{eq_term3-}
\begin{split}
&\E\big[\Var\big(\E[Y|U_n,\dots,U_{n-i}]\big|U_n,\dots,U_{n-i+1}\big)\big]\\
&=\E\biggr[\Var\biggr(\sum_{m=0}^{i}B_m(n)S(U_{n-m},\dots,U_{n-i})\biggr|U_n,\dots,U_{n-i+1}\biggr)\biggr]\\
&=n^{2j-2}\E\biggr[\Var\biggr(\sum_{m=0}^{i}\frac{1}{(j-1)!}S(U_{n-m},\dots,U_{n-i})\biggr|U_n,\dots,U_{n-i+1}\biggr)\biggr]+\mathcal{O}(n^{2j-3}).
\end{split}
\end{equation}
And for $B$, we obtain:
\begin{align*}
&\E\big[\Var\big(\E[B|U_{n-1},\dots,U_{n-i-1}]\big|U_{n-1},\dots,U_{n-i}\big)\big]\\
&=\E\biggr[\Var\biggr(\sum_{m=1}^{i+1}B_m(n)S(U_{n-m},\dots,U_{n-i-1})\biggr|U_{n-1},\dots,U_{n-i}\biggr)\biggr]\\
&=n^{2j-2}\E\biggr[\Var\biggr(\sum_{m=1}^{i+1}\frac{1}{(j-1)!}S(U_{n-m},\dots,U_{n-i-1})\biggr|U_{n-1},\dots,U_{n-i}\biggr)\biggr]+\mathcal{O}(n^{2j-3})\\
&=n^{2j-2}\E\biggr[\Var\biggr(\sum_{m=0}^{i}\frac{1}{(j-1)!}S(U_{n-m},\dots,U_{n-i})\biggr|U_n,\dots,U_{n-i+1}\biggr)\biggr]+\mathcal{O}(n^{2j-3}),
\end{align*}
where the last step uses that the $U_i$'s are independent and identically distributed (i.i.d$.$) so that we can replace $(U_{n-1},\dots,U_{n-i-1})$ by $(U_n,\dots,U_{n-i})$. In particular:
\begin{equation}\label{eq_term3}
\begin{split}
&\sum_{i=0,\dots,b_j-2}\E\big[\Var\big(\E[Y|U_n,\dots,U_{n-i}]\big|U_n,\dots,U_{n-i+1}\big)\big]\\
&-\sum_{i=0,\dots,b_j-2}\E\big[\Var\big(\E[B|U_{n-1},\dots,U_{n-i-1}]\big|U_{n-1},\dots,U_{n-i}\big)\big]\\
&=\mathcal{O}(n^{2j-3}).
\end{split}
\end{equation}

\paragraph{\textbf{Fourth term}} From Eq$.$ \eqref{eq_term3-}, we have:
\begin{align*}
&\E\big[\Var\big(\E[Y|U_n,\dots,U_{n-b_j+1}]\big|U_n,\dots,U_{n-b_j+2}\big)\big]=n^{2j-2}\E[f(U_n,\dots,U_{n-b_j+2})]+\mathcal{O}(n^{2j-3}),
\end{align*}
where
\begin{align*}
f(U_n,\dots,U_{n-b_j+2})=\Var\biggr(\sum_{m=0}^{b_j-1}\frac{1}{(j-1)!}S(U_{n-m},\dots,U_{n-b_j+1})\biggr|U_n,\dots,U_{n-b_j+2}\biggr).
\end{align*}
Note that with positive probability, $\red(U_{n-b_j+2}\dots U_{n})=\red(\pi_{k-b_j+2}\dots\pi_k)$. On this event, as a function of $U_{n-b_j+1}$, the random variable $\sum_{m=0,\dots,b_j-1}S(U_{n-m},\dots,U_{n-b_j+1})$ is not a.s. constant (in particular,  $\I_{\red(U_{n-b_j+1}\dots U_{n})=\red(\pi_{k-b_j+1}\dots\pi_k)}$ takes value $1$ or $0$ with positive probability each). So, with positive probability, $f(U_n,\dots,U_{n-b_j+2})$ is non-zero which implies $\E[f(U_n,\dots,U_{n-b_j+2})]>0$. Furthermore, since the $U_i$'s are i.i.d$.$, $\E[f(U_n,\dots,U_{n-b_j+2})]$ does not depend on $n$. Hence, there exists $C_2>0$ such that:
\begin{equation}\label{eq_term4}
\E\big[\Var\big(\E[Y|U_n,\dots,U_{n-b_j+1}]\big|U_n,\dots,U_{n-b_j+2}\big)\big]\geq C_2n^{2j-2}.
\end{equation}  
\bigskip

\paragraph{\textbf{Conclusion}}
Putting Eqs$.$ \eqref{eq_term1}, \eqref{eq_term2}, \eqref{eq_term3} and \eqref{eq_term4} together, we obtain for some $C_3>0$:
\begin{equation}\label{eq_rec5}
\begin{split}
v_n
&\geq v_{n-1}\bigr(1-C_1n^{j-3/2}v_{n-1}^{-1/2}\bigr)+C_2n^{2j-2}+\mathcal{O}(n^{2j-3})\\
&\geq v_{n-1}\bigr(1-C_1n^{j-3/2}v_{n-1}^{-1/2}\bigr)+C_3n^{2j-2}.
\end{split}
\end{equation}

Using first Eq$.$ \eqref{eq_rec1} and then Eq$.$ \eqref{eq_term3-} (where $i=0$ gives $\E[\Var(\E[Y|U_n])]=\Var(\E[Y|U_n])$), we directly obtain:
\begin{align*}
v_n\geq \Var(\E[Y|U_n])=n^{2j-2}\Var\biggr(\frac{1}{(j-1)!}S(U_n)\biggr)+\mathcal{O}(n^{2j-3}),
\end{align*}
where
\begin{align*}
S(U_n)=\frac{U_n^{\pi_k-1}(1-U_n)^{k-\pi_k}}{(\pi_k-1)!(k-\pi_k)!}.
\end{align*}
Because we assume $k\geq 2$, $S(U_n)$ is not a.s. constant. Moreover, its variance does not depend on $n$, so that there exists $C_4>0$ such that:
\begin{equation}\label{eq_rec4}
v_n\geq C_4n^{2j-2}.
\end{equation}

Using Eq$.$ \eqref{eq_rec4} for $v_{n-1}$, we can refine Eq$.$ \eqref{eq_rec5} for some $C_5>0$:
\begin{equation}\label{eq_rec6}
v_n\geq v_{n-1}\bigr(1-C_5n^{-1/2}\bigr)+C_3n^{2j-2}.
\end{equation}

Equation \eqref{eq_rec6} is the recursive estimation that we now use to obtain a lower bound for the variance $v_n=\Var(X^{(\pi,A)}(\sigma_n))$ which is sharper than the one given by Eq$.$ \eqref{eq_rec4}.

\begin{proposition}\label{prop_Lbound}
	Let $(\pi,A)$ be a fixed vincular pattern of size $k$ with $j$ blocks and let $\sigma_n$ be uniform in $S_n$. Then, there exists $C>0,n_0$ such that for $n\geq n_0$:
	\begin{align*}
	\Var(X^{(\pi,A)}(\sigma_n))\geq Cn^{2j-1}.
	\end{align*}
\end{proposition}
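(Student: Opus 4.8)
The plan is to reach the bound in two stages. In the first stage I would use only the recursive estimate \eqref{eq_rec6} to prove the \emph{weaker} lower bound
\begin{align*}
v_n \geq C' n^{2j-3/2}
\end{align*}
for some $C' > 0$ and all $n$ large enough. In the second stage I would upgrade the exponent from $2j-3/2$ to $2j-1$ by invoking the polynomiality of $v_n$ from Lemma \ref{lemma_poly}. This two-step structure is forced by the recursion itself: as explained below, \eqref{eq_rec6} is only strong enough to yield the half-integer exponent $2j-3/2$, and it is precisely the fact that $v_n$ is a polynomial (hence has integer degree) that lets us conclude the sharp power $n^{2j-1}$.

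For the first stage I would argue by induction on $n$, with induction hypothesis $v_{n-1} \geq C'(n-1)^{2j-3/2}$. Substituting into \eqref{eq_rec6} gives
\begin{align*}
v_n \geq C'(n-1)^{2j-3/2}\bigl(1 - C_5 n^{-1/2}\bigr) + C_3 n^{2j-2},
\end{align*}
so the inductive step reduces to checking that the right-hand side is at least $C' n^{2j-3/2}$, i.e. that
\begin{align*}
C_3 n^{2j-2} \geq C'\bigl[n^{2j-3/2} - (n-1)^{2j-3/2}\bigr] + C' C_5 (n-1)^{2j-3/2} n^{-1/2}.
\end{align*}
By the mean value theorem the bracketed difference is $O(n^{2j-5/2})$, while the last term equals $C' C_5 n^{2j-2} + O(n^{2j-5/2})$. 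Hence the inequality holds for all large $n$ as soon as $C_3 > C' C_5$, so I would fix any $C' < C_3/C_5$. For the base case I would use \eqref{eq_rec4}, which guarantees $v_{n_0} > 0$ at the threshold $n_0$ above which the inductive step is valid; shrinking $C'$ further if necessary so that also $C' n_0^{2j-3/2} \leq v_{n_0}$ makes the base case hold, and the induction then propagates the bound to all $n \geq n_0$.

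For the second stage I would combine $v_n \geq C' n^{2j-3/2}$ with Lemma \ref{lemma_poly}. By that lemma $v_n$ coincides, for $n$ large, with a polynomial $p(n)$ of degree at most $2j-1$; write $a$ for its coefficient of $n^{2j-1}$. If $a = 0$ then $p(n) = O(n^{2j-2})$, which is incompatible with $p(n) \geq C' n^{2j-3/2}$ since $2j-3/2 > 2j-2$; if $a < 0$ then $p(n) \to -\infty$, contradicting $p(n) > 0$. Therefore $a > 0$, so $p(n) \sim a n^{2j-1}$, and consequently $v_n \geq \tfrac{a}{2} n^{2j-1}$ for all large $n$. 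Taking $C = a/2$ yields the claim.

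The main obstacle is the balancing hidden in the first stage. The multiplicative factor $1 - C_5 n^{-1/2}$ erodes the running bound $v_{n-1} \asymp n^{2j-3/2}$ by an amount of order $C_5 n^{-1/2} \cdot n^{2j-3/2} = C_5 n^{2j-2}$, which is exactly the same order as the additive gain $C_3 n^{2j-2}$. Thus the ansatz $C' n^{2j-3/2}$ is self-consistent only on the borderline, and closing the induction hinges on the strict inequality $C' C_5 < C_3$ and hence on choosing $C'$ sufficiently small. The same computation shows one cannot push the recursion directly to $n^{2j-1}$: for an ansatz of the form $C'' n^{2j-1}$ the erosion would be of order $n^{2j-3/2}$, which dominates the gain $C_3 n^{2j-2}$, so the induction would fail. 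This is exactly why the polynomiality input of Lemma \ref{lemma_poly} is indispensable.
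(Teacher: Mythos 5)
Your proposal is correct and follows essentially the same route as the paper: both derive the intermediate bound $v_n \geq C' n^{2j-3/2}$ from the recursive estimate \eqref{eq_rec6} (using \eqref{eq_rec4} to initialize) and then upgrade the exponent to $2j-1$ via the polynomiality of Lemma \ref{lemma_poly}, exactly as you do in your second stage. The only difference is cosmetic: you close the recursion by induction on the self-consistent ansatz $C' n^{2j-3/2}$ with the balancing condition $C' C_5 < C_3$, whereas the paper unrolls the recurrence into a sum over $\ell$ and truncates to the window $\ell \in [n - C_6\sqrt{n},\, n]$ --- two valid ways of solving the same borderline recurrence.
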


\begin{proof}
	For any $n$, let $v_n=\Var(X^{(\pi,A)}(\sigma_n))$. Let $n_1$ be large enough such that $1-\frac{C_5}{\sqrt{n_1}}\geq 0$ and such that Eq$.$ \eqref{eq_rec6} holds for $n\geq n_1$. By Eq$.$ \eqref{eq_rec6}, we recursively obtain:
	\begin{align*}
	v_n \geq \sum_{\ell=n_1}^{n}C_3\ell^{2j-2}\prod_{m=\ell+1}^{n}\Bigr(1-\frac{C_5}{\sqrt{m}}\Bigr).
	\end{align*}
	Since $1-x\geq e^{-2x}\geq 1-2x$ for $x\leq 1$, we have for $\ell$ big enough:
	\begin{align*}
	\prod_{m=\ell+1}^{n}\Bigr(1-\frac{C_5}{\sqrt{m}}\Bigr)
	\geq \prod_{m=\ell+1}^{n}e^{-2\frac{C_5}{\sqrt{m}}}
	=e^{-2\sum_{m=\ell+1}^{n}\frac{C_5}{\sqrt{m}}}
	\geq 1-2\sum_{m=\ell+1}^{n}\frac{C_5}{\sqrt{m}}.
	\end{align*} 
	Hence, there exists $C_6>0$ such that for $n$ big enough:
	\begin{align*}
	v_n
	&\geq \sum_{\ell=n-C_6\sqrt{n}}^{n}C_3\ell^{2j-2}\Bigr(1-2C_5\sum_{m=\ell+1}^{n}\frac{1}{\sqrt{m}}\Bigr)\\
	&\geq \sum_{\ell=n-C_6\sqrt{n}}^{n}C_3\ell^{2j-2}\Bigr(1-2C_5\frac{C_6\sqrt{n}}{\sqrt{n-C_6\sqrt{n}}}\Bigr)\\
	&\geq \sum_{\ell=n-C_6\sqrt{n}}^{n}C_3\ell^{2j-2}(1-4C_5C_6)\\
	&\geq C_3(1-4C_5C_6)C_6\sqrt{n}\Bigr(\frac{n}{2}\Bigr)^{2j-2}\\
	&\geq C_7n^{2j-3/2},
	\end{align*}
	for some $C_7>0$. Because $v_n$ is a polynomial in $n$ for $n\geq 2(k-j)$ (see Lemma \ref{lemma_poly}), there exists $C>0$ and $n_0>0$ such that for $n\geq n_0$, $v_n\geq Cn^{2j-1}$.
\end{proof}

\acknowledgements
The author is grateful for the collaboration with Mathilde Bouvel and Valentin F\'eray who deeply supported the development of this work.

The author thanks Adrian R\"ollin for discussions on Stein's method and for pointing out useful references. Appreciated were also the constructive comments from Larry Goldstein and from the anonymous referees.

\nocite{*}
\bibliographystyle{abbrvnat}
\bibliography{biblio}

\end{document}